\theoremstyle{plain}
\newtheorem{theorem}{Theorem}
\newtheorem{lemma}[theorem]{Lemma}
\newtheorem{corollary}[theorem]{Corollary}
\newtheorem{assumption}[theorem]{Assumption}
\theoremstyle{remark}
\newtheorem{remark}[theorem]{Remark}
\DeclareMathOperator{\Var}{Var}
\newcommand{\E}{\mathbb{E}}
\newcommand{\op}{\mathrm{op}}
\title{\textbf{Distributional Limits for Eigenvalues of Graphon Kernel Matrices}}
\author{
  Behzad Aalipur\thanks{
    Department of Mathematical Sciences, University of Cincinnati.
    \texttt{aalipubd@mail.uc.edu}
  }
}
\date{}
\begin{document}

\maketitle

\begin{abstract}
We study the fluctuation behavior of individual eigenvalues of kernel matrices arising from dense graphon-based random graphs. Under minimal integrability and boundedness assumptions on the graphon, we establish distributional limits for simple, well-separated eigenvalues of the associated integral operator. A sharp probabilistic dichotomy emerges: in the non-degenerate regime, the properly normalized empirical eigenvalue satisfies a central limit theorem with an explicit variance, whereas in the degenerate regime the leading stochastic term vanishes and the centered eigenvalue converges to a weighted chi-square law determined by the operator spectrum.

The analysis requires no smoothness or Lipschitz conditions on the kernel. Prior work under comparable assumptions established only operator convergence and eigenspace consistency; the present results characterize the full distributional behavior of individual eigenvalues, extending fluctuation theory beyond the reach of classical operator-level arguments. The proofs combine second-order perturbation expansions, concentration bounds for kernel matrices, and Hoeffding decompositions for symmetric statistics, revealing that at the $\sqrt{n}$ scale the dominant randomness arises from latent-position sampling rather than Bernoulli edge noise.
\end{abstract}

\textbf{Keywords:} Graphon models; eigenvalue distributions; kernel matrices; U-statistics; spectral perturbation.

\textbf{MSC2020:} Primary 60B20; Secondary 60F05, 62G20, 05C80.

\section{Introduction}

Spectral methods play a central role in the analysis of random graphs and network data 
\cite{lei2015consistency,rohe2011spectral}. Their success in community detection 
\cite{abbe2017community,lei2015consistency,zhao2012consistency}, clustering 
\cite{von2007tutorial}, and goodness-of-fit procedures \cite{bickel2011method,gao2015rate} 
has motivated a substantial body of work on the asymptotic behavior of eigenvalues and 
eigenvectors in latent-variable models. Classical results establish almost-sure or $L^2$ 
convergence of empirical kernel operators \cite{koltchinskii2000random,karoui2008spectrum,braun2006accurate}, 
together with consistency of eigenspaces and certain linear spectral statistics. 
However, the distributional behavior of individual eigenvalues, a question of central 
importance for uncertainty quantification remains comparatively underdeveloped.

This paper develops a distributional theory for simple eigenvalues of kernel matrices generated by dense graphon-based random graphs. Working within the graphon framework of Lovász and Szegedy \cite{lovasz2006limits,lovasz2012large,borgs2008convergent,borgs2012convergent}, we show that each empirical eigenvalue exhibits a sharp probabilistic dichotomy in its fluctuations. Depending on the degeneracy 
structure of the population eigenfunction, the leading fluctuations are either asymptotically 
Gaussian or converge to an explicit weighted chi-square law. A version of this dichotomy was first obtained by \cite{chatterjee2025fluctuation} under Lipschitz regularity of the graphon. Our contribution is to extend this phenomenon to the minimal integrability setting 
$W\in L^2([0,1]^2)\cap L^\infty([0,1]^2)$, thereby covering discontinuous, non-smooth, and 
piecewise-constant models such as stochastic block models with unequal block sizes. The results 
demonstrate that smoothness assumptions, while technically convenient, are not intrinsic to the 
limiting behavior.

From a probabilistic perspective, the fluctuation behavior is driven by the interaction between 
two sources of randomness: latent-position sampling and Bernoulli edge noise. A central finding 
of this work is that, at the $\sqrt{n}$ scale, the dominant contribution arises from the latent 
variables, while edge noise is asymptotically negligible. This separation, implicit in earlier 
work on spectral convergence \cite{koltchinskii2000random} and latent-position 
models \cite{lei2015consistency,abbe2017community,athreya2018statistical}, becomes explicit in 
our analysis and clarifies the sources of uncertainty in spectral procedures. In particular, the 
zero diagonal of the kernel matrix-an intrinsic structural feature of graphon models-plays a 
decisive role in the degenerate regime, where the linear Hoeffding projection vanishes and the 
limiting distribution becomes a weighted chi-square series.

Our proof combines three probabilistic ingredients. First, a second-order Rayleigh-Schrödinger expansion for simple eigenvalues (\cite{kato1966perturbation}, Lemma ~\ref{lem:expansion}) reduces the problem to the analysis 
of a centered Rayleigh quotient. Second, concentration of kernel matrices under boundedness 
assumptions yields operator-norm control of the empirical perturbation. Third, a Hoeffding 
decomposition for the resulting symmetric statistic \cite{serfling2009approximation} isolates the 
linear and degenerate components, revealing the mechanism behind the Gaussian versus weighted 
chi-square regimes. This synthesis demonstrates that the fluctuation dichotomy persists well beyond the smooth settings considered in \cite{chatterjee2025fluctuation}, and that the limiting distributions 
are determined by spectral structure and degeneracy rather than regularity of the kernel.

The remainder of the paper is organized as follows. Section~\ref{sec:prelim} introduces the 
graphon model, the kernel matrix, and the associated integral operator. Section~\ref{sec:main} 
states the main fluctuation theorem and outlines the perturbation-theoretic approach. 
Section~\ref{sec:examples} presents non-Lipschitz examples illustrating the scope of the theory. 
Section~\ref{sec:discussion} discusses implications for spectral methods and directions for future 
work. Technical proofs are given in Section~\ref{sec:proof_strategy}.

\section{Preliminaries}
\label{sec:prelim}

A graphon is a measurable symmetric function
\begin{equation}
W : [0,1]^2 \to [0,1], \quad W(x,y)=W(y,x) \text{ for almost every } (x,y)\in[0,1]^2.
\label{eq:graphon_def}
\end{equation}
Graphons provide a canonical framework for modeling dense inhomogeneous random graphs and arise as limits of sequences of finite graphs
\citep{lovasz2006limits,lovasz2012large,borgs2008convergent,borgs2012convergent}.
To generate a random graph on $n$ vertices, we first sample latent variables
\begin{equation}
U_1,\dots,U_n \stackrel{\mathrm{i.i.d.}}{\sim} \mathrm{Unif}[0,1].
\label{eq:latent_vars}
\end{equation}
Conditional on $(U_1,\dots,U_n)$, edges are generated independently according to
\begin{equation}
A_{ij} \mid U_1,\dots,U_n \;\sim\; \mathrm{Bernoulli}\bigl(W(U_i,U_j)\bigr), \quad i<j,
\label{eq:edge_generation}
\end{equation}
with $A_{ji}=A_{ij}$ and $A_{ii}=0$.
This defines a simple undirected random graph whose conditional expectation satisfies
\begin{equation}
\mathbb{E}\bigl[\mathbf{A}_n \mid U_1,\dots,U_n\bigr] = \mathbf{K}_n,
\label{eq:conditional_mean}
\end{equation}
where $\mathbf{K}_n$ denotes the kernel matrix defined below
\citep{bickel2009nonparametric,wolfe2013nonparametric,chatterjee2015matrix}.

The kernel matrix is given by
\begin{equation}
(\mathbf{K}_n)_{ij} :=
\begin{cases}
W(U_i,U_j), & i\neq j,\\
0, & i=j.
\end{cases}
\label{eq:kernel_matrix}
\end{equation}
The zero diagonal reflects the absence of self-loops and is intrinsic to graphon-based network models.
This structural restriction plays a nontrivial role in the fluctuation behavior, particularly in degenerate regimes.
As will be seen later, it has important consequences for the asymptotic behavior of spectral statistics.

The graphon $W$ induces a compact self-adjoint integral operator
\begin{equation}
T_W : L^2([0,1]) \to L^2([0,1]), \quad (T_W f)(x) := \int_0^1 W(x,y)f(y)\,dy.
\label{eq:integral_operator}
\end{equation}
Since $W\in L^2([0,1]^2)$, the operator $T_W$ is Hilbert-Schmidt and therefore compact.
Let $(\lambda_k,\varphi_k)_{k\ge1}$ denote its eigenvalues and eigenfunctions, ordered so that $|\lambda_1|\ge|\lambda_2|\ge\cdots\ge0$ and normalized according to
\begin{equation}
\mathbb{E}[\varphi_k(U)^2]=1, \quad U\sim\mathrm{Unif}[0,1].
\label{eq:eigenfunction_normalization}
\end{equation}
For each fixed $k$, the empirical eigenvalues of the kernel matrix satisfy
\begin{equation}
\lambda_k(\mathbf{K}_n)/(n-1)\xrightarrow{\mathbb{P}}\lambda_k
\quad\text{as }n\to\infty,
\label{eq:eigenvalue_convergence}
\end{equation}
see for example \citet{koltchinskii2000random,zhu2024central}. 

The corresponding population eigenfunction $\varphi_r$ is discretized as
\begin{equation}
\mathbf{v}_r := (\varphi_r(U_1),\dots,\varphi_r(U_n))^\top\in\mathbb{R}^n.
\label{eq:unnormalized_eigenvector}
\end{equation}
Define the normalization factor
\begin{equation}
s_{r,n} := \sum_{i=1}^n\varphi_r(U_i)^2 = n\bigl(1+V_{r,n}\bigr),
\label{eq:normalization_factor}
\end{equation}
where
\begin{equation}
V_{r,n} := \frac{1}{n}\sum_{i=1}^n\bigl(\varphi_r(U_i)^2-1\bigr).
\label{eq:centered_eigenfunction_variance}
\end{equation}
The normalized discretization vector is
\begin{equation}
\mathbf{u}_r := s_{r,n}^{-1/2}(\varphi_r(U_1),\dots,\varphi_r(U_n))^\top,
\label{eq:normalized_eigenvector}
\end{equation}
which satisfies $\|\mathbf{u}_r\|_2=1$.

Let $A$ be a self-adjoint operator with a simple eigenvalue $\lambda_r$ and normalized eigenvector $\psi_r$, and let $E$ be a self-adjoint perturbation. The second-order Rayleigh-Schr\"odinger expansion yields
\[
\lambda_r(A+E)-\lambda_r(A)
=
\langle\psi_r,E\psi_r\rangle
+
\sum_{k\neq r}\frac{|\langle\psi_k,E\psi_r\rangle|^2}{\lambda_r-\lambda_k}
+
R(E),
\]
with remainder bounded by $|R(E)|\le 2\|E\|_{\mathrm{op}}^2/\gamma_r$.
In the present setting, the leading term corresponds to the discrete Rayleigh quotient
\begin{equation}
\mathbf{u}_r^\top\mathbf{K}_n\mathbf{u}_r
=
\frac{1}{s_{r,n}}
\sum_{i\neq j}\varphi_r(U_i)W(U_i,U_j)\varphi_r(U_j).
\label{eq:rayleigh_quotient}
\end{equation}
This quantity is a second-order U-statistic with symmetric kernel
\begin{equation}
h_r(x,y) := \varphi_r(x)W(x,y)\varphi_r(y).
\label{eq:ustat_kernel}
\end{equation}
The Hoeffding decomposition gives
\begin{equation}
\frac{1}{n(n-1)}\sum_{i\neq j}h_r(U_i,U_j)
=
\theta
+
\frac{2}{n}\sum_{i=1}^n h_1(U_i)
+
U_n^{(2)},
\label{eq:hoeffding_decomposition}
\end{equation}
where
\(
\theta := \mathbb{E}[h_r(U,U')]\), \( h_1(x) := \mathbb{E}[h_r(x,U)]-\theta\), and
\(U_n^{(2)}\) is a degenerate second-order U-statistic.

We defer discussion of how the linear and degenerate Hoeffding components determine the Gaussian versus weighted chi-square regimes to Section~\ref{sec:proof_strategy}.

\section{Main Results}
\label{sec:main}

Recall from Section~\ref{sec:prelim} that $\mathbf{K}_n$ denotes the kernel matrix with zero diagonal,
and $\mathbf{A}_n$ is the adjacency matrix with conditional mean $\mathbb{E}[\mathbf{A}_n \mid U_1,\dots,U_n] = \mathbf{K}_n$.

To isolate the leading-order fluctuations of a target eigenvalue $\lambda_r$, we work under a small 
set of regularity conditions on the kernel $W$ and its spectrum. These assumptions are very mild and 
designed to ensure that the integral operator $T_W$ has a simple, well-separated eigenvalue with an eigenfunction having finite fourth moment that are natural in the graphon setting and hold for a broad class 
of non-Lipschitz kernels.

\begin{assumption}\label{ass:ustat}
Fix an index $r\ge1$. Let $W\in L^2([0,1]^2)\cap L^\infty([0,1]^2)$ be symmetric, 
and let $T_W:L^2([0,1])\to L^2([0,1])$ denote the associated integral operator
\[
(T_W f)(x) := \int_0^1 W(x,y) f(y)\,dy.
\]
Let $(\lambda_k, \varphi_k)_{k\ge1}$ denote an $L^2$-orthonormal eigendecomposition of $T_W$, 
ordered by decreasing $|\lambda_k|$. Assume:

\begin{enumerate}
\item (\textbf{Spectral gap at $r$}) $\lambda_r\neq 0$ is simple and
\[
\gamma_r:=\min_{k\neq r}|\lambda_r-\lambda_k|>0.
\]
\item (\textbf{Finite fourth moment}) $\E[\varphi_r(U)^4]<\infty$.

\end{enumerate}
\end{assumption}

\begin{remark}
Since $T_W$ is Hilbert-Schmidt, each eigenfunction $\varphi_k$ lies in $L^2([0,1])$ automatically;
we only impose additional moment condition stated above when needed for distributional limit theorems.
\end{remark}

\begin{theorem}[Unified eigenvalue fluctuation dichotomy]
\label{thm:ustat}
Under Assumption~\ref{ass:ustat}, let $\lambda_r$ denote the $r$th eigenvalue of the 
population integral operator $T_W$. Then:

\begin{enumerate}
\item \textbf{Non-degenerate case.}
If $\sigma_r^2 := \Var(\varphi_r(U)^2) > 0$, then
\[
\sqrt{n}\left(\frac{\lambda_r(\mathbf{K}_n)}{n-1}-\lambda_r\right)
\xrightarrow{d}
\mathcal{N}(0,\lambda_r^2\sigma_r^2).
\]

\item \textbf{Degenerate case.}
If $\sigma_r^2 := \Var(\varphi_r(U)^2) = 0$, which holds if and only if 
$\varphi_r^2 \equiv 1$ almost everywhere, then
\[
\lambda_r(\mathbf{K}_n) - (n-1)\lambda_r - C_r
\xrightarrow{d}
\sum_{k\neq r}\frac{\lambda_r\lambda_k}{\lambda_r-\lambda_k}(Z_k^2-1),
\qquad
C_r:=\sum_{k\neq r}\frac{\lambda_k^2}{\lambda_r-\lambda_k}.
\]
where $(Z_k)_{k\ge1}$ are i.i.d.\ standard normal random variables, and the series 
converges in $L^2$.
\end{enumerate}
\end{theorem}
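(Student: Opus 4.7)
The plan is to combine a second-order Rayleigh-Schr\"odinger expansion for $\lambda_r(\mathbf{K}_n)$ with a Hoeffding decomposition of the induced U-statistics. Let $\hat{\mathbf{u}}_k$ denote the Gram--Schmidt orthonormalization of $(\mathbf{u}_k)_{k\ge1}$ with $\mathbf{u}_r$ processed first (so that $\hat{\mathbf{u}}_r=\mathbf{u}_r$), and take as reference the operator with eigenpairs $\bigl((n-1)\lambda_k,\hat{\mathbf{u}}_k\bigr)$; the expansion then reads
\[
\lambda_r(\mathbf{K}_n)
\;=\; \mathbf{u}_r^\top \mathbf{K}_n\,\mathbf{u}_r
\;+\; \sum_{k \neq r}\frac{|\hat{\mathbf{u}}_k^\top \mathbf{K}_n\,\mathbf{u}_r|^2}{(n-1)(\lambda_r - \lambda_k)}
\;+\; R_n,
\]
with remainder $R_n$ controlled by operator-norm concentration of $\mathbf{K}_n$ (using $W\in L^\infty$) at a scale negligible against the fluctuations. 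A direct computation shows $\theta=\lambda_r$ and $h_1(x)=\lambda_r(\varphi_r(x)^2-1)$ in~\eqref{eq:hoeffding_decomposition}, so the linear Hoeffding projection of the Rayleigh quotient equals $2\lambda_r V_{r,n}$.

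In the non-degenerate case $\sigma_r^2>0$, the classical CLT gives $\sqrt n\,V_{r,n}\xrightarrow{d}\mathcal{N}(0,\sigma_r^2)$, which dominates both the $O_P(1)$ degenerate contribution $(n-1)U_n^{(2)}$ and the $O_P(1)$ second-order perturbation sum. Combining with $s_{r,n}=n(1+V_{r,n})$, expanding $(1+V_{r,n})^{-1}$, and dividing by $(n-1)$ yields part~(1) with asymptotic variance $\lambda_r^2\sigma_r^2$.

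The degenerate case is more delicate: $\varphi_r^2\equiv1$ a.e.\ forces $V_{r,n}\equiv0$ and $h_1\equiv0$, so the Rayleigh quotient reduces exactly to $(n-1)\lambda_r+(n-1)U_n^{(2)}$, and classical degenerate U-statistic theory gives $(n-1)U_n^{(2)}\xrightarrow{d}\sum_{k\neq r}\lambda_k(Z_k^2-1)$. Here the weights are identified by the substitution $f_k:=\varphi_r\varphi_k$ (using $\varphi_r^2=1$) as the eigenvalues of the centered kernel $\varphi_r(x)W(x,y)\varphi_r(y)-\lambda_r$, and the Gaussian coordinates $Z_k=\lim Y_{k,n}$ are defined via $Y_{k,n}:=n^{-1/2}\sum_i\varphi_r(U_i)\varphi_k(U_i)$. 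The second-order perturbation sum is now of the same $O_P(1)$ order and must be retained. Writing $\alpha_k:=\mathbf{u}_k^\top\mathbf{u}_r\sim n^{-1/2}Y_{k,n}$, the Gram--Schmidt adjustment $\hat{\mathbf{u}}_k=\mathbf{u}_k-\alpha_k\mathbf{u}_r+o_P(n^{-1/2})$ cancels the $(n-1)\lambda_r$ piece of $\mathbf{u}_k^\top\mathbf{K}_n\mathbf{u}_r$, leaving $\hat{\mathbf{u}}_k^\top\mathbf{K}_n\mathbf{u}_r=\sqrt n\,\lambda_k Y_{k,n}+o_P(\sqrt n)$. Each summand thus contributes $\tfrac{\lambda_k^2}{\lambda_r-\lambda_k}Y_{k,n}^2$; a joint CLT for $(Y_{k,n})_{k\neq r}$ matches these limits to the same $(Z_k)$ driving the U-statistic, and the identity $\lambda_k+\tfrac{\lambda_k^2}{\lambda_r-\lambda_k}=\tfrac{\lambda_r\lambda_k}{\lambda_r-\lambda_k}$, together with separation of the deterministic mean $C_r$ from the centered chi-squares, yields part~(2). $L^2$-convergence of the series follows from $\sum\lambda_k^2<\infty$ and the spectral gap.

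The main obstacle is the degenerate regime: the Gram--Schmidt correction must be tracked to sufficient order, since a na\"ive use of $\mathbf{u}_k$ in the perturbation sum produces the wrong coefficient $(\lambda_r+\lambda_k)^2$ in place of $\lambda_k^2$, and the two stochastic inputs must be coupled to a common limiting family $(Z_k)$ via a simultaneous CLT. Tail tightness of the infinite chi-square series is handled by the finite fourth-moment assumption on $\varphi_r$ combined with Hilbert--Schmidt decay of $(\lambda_k)$.
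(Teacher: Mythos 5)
Your proposal is correct and follows essentially the same strategy as the paper: reduce to the Rayleigh quotient via a second-order perturbation expansion, split it through the Hoeffding decomposition, and drive the dichotomy by whether the linear Hoeffding projection $h_1 = \lambda_r(\varphi_r^2-1)$ is nontrivial. The one substantive difference is presentational: the paper uses the exact Schur-complement identity \eqref{expansion} (with $\mathbf{V}$ spanning $\mathbf{u}^\perp$) and then approximates the resolvent on a finite-rank truncation (Lemmas~\ref{lem:resolvent} and~\ref{lem:degU}), whereas you invoke the eigenbasis form of the Rayleigh--Schr\"odinger expansion around a reference matrix with eigenpairs $\bigl((n-1)\lambda_k,\hat{\mathbf{u}}_k\bigr)$; these are equivalent, but your route leaves the choice of reference matrix (needed to make $\|\mathbf{E}\|_{\op}=O_p(\sqrt n)$ and thereby push the third-order remainder to $o_p(1)$ in the degenerate regime) implicit, while the Schur-complement identity sidesteps that bookkeeping entirely. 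Your explicit tracking of the Gram--Schmidt correction $\hat{\mathbf{u}}_k=\mathbf{u}_k-\alpha_k\mathbf{u}_r+o_p(n^{-1/2})$ and the resulting cancellation of the $(n-1)\lambda_r\alpha_k$ contribution is a genuine refinement: it makes transparent why the resolvent coefficient is $\lambda_k^2$ rather than $(\lambda_r+\lambda_k)^2$, a point the paper's Lemma~\ref{lem:resolvent} treats somewhat tersely when it passes from $\mathbf{u}^\top\mathbf{K}_n\mathbf{u}_k$ to $\frac{(n-1)\lambda_k}{\sqrt n}T_{k,n}$ with only a Cauchy--Schwarz bound on $\mathbf{u}^\top\mathbf{e}_k$. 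One small caveat: in the non-degenerate case you should display the cancellation $2\lambda_r V_{r,n}-\lambda_r V_{r,n}=\lambda_r V_{r,n}$ between the Hoeffding linear term and the $s_{r,n}^{-1}$ expansion, since a naive reading of the $2\lambda_r V_{r,n}$ projection alone would suggest the wrong variance $4\lambda_r^2\sigma_r^2$; and the tightness of the chi-square tail rests only on $\sum_k\lambda_k^2<\infty$ and the spectral gap, not on the fourth moment of $\varphi_r$, which is trivially finite when $\varphi_r^2\equiv1$.
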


\begin{remark}[Origin of the degenerate coefficients]
The coefficient $\frac{\lambda_r \lambda_k}{\lambda_r - \lambda_k}$ is the algebraic sum of 
two distinct sources of fluctuation.
Note that 
\[
\lambda_k(Z_k^2-1)
+
\frac{\lambda_k^2}{\lambda_r-\lambda_k}(Z_k^2-1)
=
\frac{\lambda_r\lambda_k}{\lambda_r-\lambda_k}(Z_k^2-1),
\]
Both terms are $O_p(1)$ in the degenerate regime.

\begin{enumerate}
    \item The \textbf{linear term} contributes $\lambda_k(Z_k^2-1)$.
    This fluctuation arises specifically because $\mathbf{K}_n$ has a zero diagonal; even for the case
    of $\mathbb{E}[h_{r,2}(U_1,U_2)\mid U_1]=0$, the 
    degenerate U-statistic $\frac{1}{n}\sum_{i\neq j}h_{r,2}(U_i,U_j)$ does not vanish.
    \item The \textbf{resolvent term} contributes 
    $\frac{\lambda_k^2}{\lambda_r - \lambda_k}(Z_k^2-1)$ due to the spectral perturbation 
    from orthogonal modes interacting through the Rayleigh-Schrödinger expansion.
\end{enumerate}

\end{remark}

\subsection{Proof strategy}
\label{sec:proof_strategy}

We outline the main steps in the proof of Theorem~\ref{thm:ustat}.
The analysis is carried out at the matrix level using second-order
perturbation theory for simple eigenvalues, combined with Hoeffding
decompositions of U-statistics.

We work with the normalized kernel matrix
\[
\widetilde{\mathbf K}_n := \frac{1}{n-1}\mathbf K_n,
\qquad
\widetilde{\mathbf\Delta}_n
:= \widetilde{\mathbf K}_n - \mathbb E[\widetilde{\mathbf K}_n].
\]
By Lemma~\ref{lem:approx_eigvec}, the $r$th eigenvector of
$\mathbb E[\widetilde{\mathbf K}_n]$ is well approximated by the
normalized discretization $\mathbf u_r$, and the corresponding eigengap
is bounded below by $\gamma_r$. Applying the Rayleigh-Schrödinger
expansion for simple eigenvalues yields
\begin{equation}
\lambda_r(\widetilde{\mathbf K}_n)-\lambda_r
=
\mathbf u_r^\top \widetilde{\mathbf\Delta}_n \mathbf u_r
+
\text{\emph{(resolvent correction)}}
+
R_n,
\label{eq:rs_outline}
\end{equation}
where the remainder $R_n$ is quadratic in
$\widetilde{\mathbf\Delta}_n$ and satisfies $R_n=O_p(n^{-1})$ under the
boundedness of $W$, and is therefore negligible at the $\sqrt n$ scale.

The leading term in \eqref{eq:rs_outline} is a centered Rayleigh quotient,
which can be written as a second-order U-statistic with kernel
$h_r(x,y)=\varphi_r(x)W(x,y)\varphi_r(y)$. Its Hoeffding decomposition
isolates a linear component proportional to
$V_{r,n}=\frac1n\sum_{i=1}^n(\varphi_r(U_i)^2-1)$.
If $\sigma_r^2=\Var(\varphi_r(U)^2)>0$, this linear term dominates and
the classical central limit theorem yields Gaussian fluctuations.

The resolvent correction in \eqref{eq:rs_outline} corresponds to the
second-order term in the Rayleigh-Schrödinger expansion,
\[
\sum_{k\neq r}
\frac{
(\mathbf u_k^\top \widetilde{\mathbf\Delta}_n \mathbf u_r)^2
}{
\lambda_r-\lambda_k
},
\]
which captures the effect of random projections of the target eigenvector
onto orthogonal eigenspaces, weighted by the inverse spectral gaps.
In the degenerate regime $\varphi_r^2\equiv1$, the linear Hoeffding term
vanishes identically, and the leading fluctuations arise from the
degenerate U-statistic component of the Rayleigh quotient together with
this resolvent correction.

Both contributions can be expressed in terms of the empirical
cross-projections
\[
T_{k,n}=\frac{1}{\sqrt n}\sum_{i=1}^n
\varphi_r(U_i)\varphi_k(U_i),
\qquad k\neq r.
\]
Combining these terms yields weighted quadratic forms in
$T_{k,n}^2-1$, and leads to the non-Gaussian weighted chi-square limit in
the degenerate case. Full technical details are deferred to
Section~\ref{sec:tech}.

\subsection{Implications for the adjacency matrix}
When analyzing the adjacency matrix $\mathbf{A}_n$ with conditional mean $\mathbb{E}[\mathbf{A}_n\mid U_1,\dots,U_n]=\mathbf{K}_n$, 
the eigenvalue fluctuations at the $\sqrt{n}$ scale are governed by the latent kernel $\mathbf{K}_n$, not by the Bernoulli edge noise.
\begin{corollary}[Adjacency eigenvalue fluctuations in the non-degenerate regime]
\label{cor:edge_noise}
Let $\mathbf{A}_n$ be the adjacency matrix with $\mathbb{E}[\mathbf{A}_n\mid U_1,\dots,U_n]=\mathbf{K}_n$.
Assume Assumption~\ref{ass:ustat} and suppose $\sigma_r^2:=\Var(\varphi_r(U)^2)>0$.
Then
\[
\sqrt{n}\left(
\lambda_r\!\left(\frac{\mathbf{A}_n}{n-1}\right)
-
\lambda_r\!\left(\frac{\mathbf{K}_n}{n-1}\right)
\right)
\overset{p}{\longrightarrow} 0,
\]
and
\[
\sqrt{n}\left(
\lambda_r\!\left(\frac{\mathbf{A}_n}{n-1}\right)
-
\lambda_r
\right)
\xrightarrow{d}
\mathcal{N}\!\left(0,\lambda_r^2\sigma_r^2\right).
\]
\end{corollary}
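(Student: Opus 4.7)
The plan is to prove the proximity claim first, since the distributional statement then follows by combining it with Theorem~\ref{thm:ustat} via Slutsky's lemma. Write $\mathbf{A}_n=\mathbf{K}_n+\mathbf{E}_n$, where $\mathbf{E}_n=\mathbf{A}_n-\mathbf{K}_n$ has zero diagonal and, conditionally on $U_1,\dots,U_n$, independent centered entries with $\Var(E_{ij}\mid U)=W(U_i,U_j)(1-W(U_i,U_j))\le 1/4$. I will condition on the latent variables throughout, so that $\mathbf{K}_n$ and its eigenvectors become deterministic, and $\mathbf{E}_n$ is a genuine independent-entry symmetric noise matrix.

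Let $\hat{\mathbf{u}}_r$ be the unit eigenvector of $\mathbf{K}_n$ associated with $\lambda_r(\mathbf{K}_n)$, and let $\tilde\gamma_{r,n}$ be the $r$th eigengap of $\mathbf{K}_n$. By the eigenvalue convergence \eqref{eq:eigenvalue_convergence} combined with the spectral gap in Assumption~\ref{ass:ustat}, $\tilde\gamma_{r,n}\ge c\,n$ with probability tending to one, for some $c>0$. Standard concentration for symmetric matrices with independent bounded entries (e.g.\ Bandeira-van Handel or the matrix Bernstein inequality) gives $\|\mathbf{E}_n\|_{\op}=O_p(\sqrt{n})$, so $\|\mathbf{E}_n\|_{\op}/\tilde\gamma_{r,n}=O_p(n^{-1/2})$ and the Rayleigh-Schrödinger expansion applied to the pair $(\mathbf{K}_n,\mathbf{E}_n)$ is valid. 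It yields
\[
\lambda_r(\mathbf{A}_n)-\lambda_r(\mathbf{K}_n)
=\hat{\mathbf{u}}_r^\top\mathbf{E}_n\hat{\mathbf{u}}_r+R_n,
\qquad
|R_n|\le \frac{2\|\mathbf{E}_n\|_{\op}^2}{\tilde\gamma_{r,n}}=O_p(1).
\]
For the linear term, conditionally on the latent variables,
\[
\hat{\mathbf{u}}_r^\top\mathbf{E}_n\hat{\mathbf{u}}_r
=2\sum_{i<j}\hat{u}_{r,i}\hat{u}_{r,j}E_{ij}
\]
is a weighted sum of independent centered random variables, with conditional variance bounded by $\sum_{i<j}\hat u_{r,i}^2\hat u_{r,j}^2\le\tfrac12\|\hat{\mathbf u}_r\|_2^4=\tfrac12$. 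Hence this term is $O_p(1)$ unconditionally as well.

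Combining the two bounds, $\lambda_r(\mathbf{A}_n)-\lambda_r(\mathbf{K}_n)=O_p(1)$, and therefore
\[
\sqrt n\!\left(\lambda_r\!\bigl(\tfrac{\mathbf{A}_n}{n-1}\bigr)-\lambda_r\!\bigl(\tfrac{\mathbf{K}_n}{n-1}\bigr)\right)
=\frac{\sqrt n}{n-1}\bigl(\lambda_r(\mathbf{A}_n)-\lambda_r(\mathbf{K}_n)\bigr)=O_p(n^{-1/2})\xrightarrow{p}0,
\]
which is the first assertion. The second assertion follows by adding and subtracting $\lambda_r(\mathbf{K}_n/(n-1))$ and applying the non-degenerate case of Theorem~\ref{thm:ustat} together with Slutsky's theorem. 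The main technical point is the control of $R_n$: it requires both the operator-norm bound $\|\mathbf{E}_n\|_{\op}=O_p(\sqrt n)$ and a high-probability lower bound $\tilde\gamma_{r,n}\gtrsim n$ on the eigengap of $\mathbf{K}_n$, which in turn relies on Weyl's inequality together with the eigenvalue convergence inherited from Section~\ref{sec:prelim}; conceptually straightforward, but the place where edge-noise concentration must be paired correctly with spectral perturbation bounds.
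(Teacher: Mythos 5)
Your argument is correct, and it is genuinely different from the paper's. The paper disposes of the first display in one line by citing an external eigenvalue-perturbation bound for inhomogeneous Bernoulli graphs (Theorem~2 of \citet{aalipur2026bootstrap}), then invokes Slutsky; it does not give a self-contained argument. You instead prove the edge-noise negligibility from scratch by conditioning on the latent positions, treating $\mathbf{E}_n=\mathbf{A}_n-\mathbf{K}_n$ as a centered symmetric matrix with independent bounded entries, and pairing two ingredients that already appear elsewhere in the paper: the operator-norm bound $\|\mathbf{E}_n\|_{\op}=O_p(\sqrt n)$ for such matrices, and the second-order Rayleigh--Schr\"odinger expansion of Lemma~\ref{lem:expansion} applied to the pair $(\mathbf{K}_n,\mathbf{E}_n)$ on the high-probability event where Lemma~\ref{lem:eigengap} gives an eigengap of order $n$. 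Your key observation is that Weyl alone would only give $\lambda_r(\mathbf{A}_n)-\lambda_r(\mathbf{K}_n)=O_p(\sqrt n)$, which after rescaling by $\sqrt n/(n-1)$ is merely $O_p(1)$ and insufficient; the second-order expansion is what upgrades this to $O_p(1)$ at the unnormalized scale and hence $o_p(1)$ after rescaling. The zero-mean linear term $\hat{\mathbf u}_r^\top\mathbf{E}_n\hat{\mathbf u}_r$ then has conditional variance $4\sum_{i<j}\hat u_{r,i}^2\hat u_{r,j}^2\Var(E_{ij}\mid U)\le \sum_{i<j}\hat u_{r,i}^2\hat u_{r,j}^2\le\tfrac12$, so it is $O_p(1)$, and the remainder is $O_p(n)/\Theta(n)=O_p(1)$; everything integrates cleanly from conditional to unconditional bounds because the conditional variance and operator-norm bounds are uniform in $U$. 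Your route makes the corollary self-contained within the machinery the paper has already set up, which is an improvement in transparency, at the cost of being longer than a citation; the paper's citation is shorter but opaque to a reader without the external reference.
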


\begin{proof}
The first display follows from the eigenvalue perturbation bound for inhomogeneous Bernoulli graphs; see Theorem~2 in \citet{aalipur2026bootstrap}.
The second display follows by Slutsky's theorem, combining the first display with the non-degenerate conclusion of Theorem~\ref{thm:ustat} for $\lambda_r(\mathbf{K}_n/(n-1))$.
\end{proof}

\begin{remark}
If $\varphi_r^2\equiv 1$ almost everywhere, then the limit in Theorem~\ref{thm:ustat} is non-Gaussian, and the same non-Gaussian limit carries over from $\mathbf{K}_n$ to $\mathbf{A}_n$ under the same edge-noise perturbation bound.
\end{remark}

\section{Examples}
\label{sec:examples}

This section presents several examples of graphons that satisfy Assumption~\ref{ass:ustat} and therefore fall within the scope of Theorem~\ref{thm:ustat}, while lying outside the Lipschitz framework considered in earlier work. These examples illustrate that $\sqrt{n}$-scale eigenvalue fluctuations depend primarily on spectral and integrability properties rather than smoothness, and they clarify the distinction between the non-degenerate and degenerate regimes.

\subsection{Brownian square-root kernel}

Define $W(x,y):=\min\{x,y\}+\sqrt{xy}$ on $[0,1]^2$, and write $W=K_0+K_1$ with
$K_0(x,y):=\min\{x,y\}$ (Brownian covariance kernel, $\lambda_k(K_0)\asymp k^{-2}$)
and $K_1(x,y):=\sqrt{xy}$ (rank one). Hence $T_W$ is a finite-rank perturbation of
$T_{K_0}$ and $\lambda_k(W)\asymp k^{-2}$.

The kernel is not Lipschitz: $\partial_x\sqrt{xy}=\sqrt{y}/(2\sqrt{x})$ diverges as
$x\to0$, and the same local obstruction implies it is not piecewise Lipschitz under
any finite partition.

Nevertheless, $W\in L^2([0,1]^2)\cap L^\infty([0,1]^2)$ with $\|W\|_\infty\le2$.
Under Assumption~\ref{ass:ustat}, Theorem~\ref{thm:ustat} applies and yields
$\sqrt{n}(\lambda_r(\mathbf{K}_n)/(n-1)-\lambda_r)\xrightarrow{d}\mathcal{N}(0,\lambda_r^2\sigma_r^2)$
whenever $\sigma_r^2>0$, showing coverage beyond Lipschitz kernels.

\subsection{Stochastic block model with unequal block sizes}

Consider a two-block stochastic block model with block proportions $\pi_1=1/3$ and $\pi_2=2/3$, within-block edge probability $p$, and cross-block probability $q<p$. The associated graphon is
\[
W(x,y)=
\begin{cases}
p, & (x,y)\in[0,1/3]^2\cup[1/3,1]^2,\\
q, & \text{otherwise}.
\end{cases}
\]
This kernel is piecewise constant and discontinuous. The second eigenfunction $\varphi_2$ encodes the block structure. Due to unequal block sizes, $\varphi_2$ is non-constant and attains larger absolute values on the larger block. Consequently,
$\mathbb{E}[\varphi_2(U)^2]=1$ and $\Var(\varphi_2(U)^2)>0$, placing this model in the non-degenerate regime.

The kernel satisfies Assumption~\ref{ass:ustat}. It belongs to $L^2([0,1]^2)$ and $L^\infty([0,1]^2)$, its spectrum is finite dimensional and determined by the block structure, and its eigenfunctions are bounded and piecewise constant. Theorem~\ref{thm:ustat} therefore yields
\[
\sqrt{n}\left(\frac{\lambda_2(\mathbf{K}_n)}{n-1}-\lambda_2\right)
\xrightarrow{d}
\mathcal{N}\bigl(0,\lambda_2^2\sigma_2^2\bigr),
\]
where $\sigma_2^2=\Var(\varphi_2(U)^2)>0$. This example illustrates that the theory applies to practically relevant network models that fall outside Lipschitz-based analyses.

\subsection{Symmetric stochastic block model}

For a symmetric two-block stochastic block model with equal block sizes $\pi_1=\pi_2=1/2$, within-block probability $p$, and cross-block probability $q<p$, the graphon is
\[
W(x,y)=
\begin{cases}
p, & (x,y)\in[0,1/2]^2\cup[1/2,1]^2,\\
q, & \text{otherwise}.
\end{cases}
\]
By symmetry, the second eigenfunction satisfies $\varphi_2(x)\in\{-1,1\}$ almost everywhere, so that $\varphi_2^2\equiv1$ and $\Var(\varphi_2(U)^2)=0$. This model therefore lies in the degenerate regime of Theorem~\ref{thm:ustat}. The eigenvalue fluctuation is non-Gaussian and satisfies
\[
\lambda_2(\mathbf{K}_n)-(n-1)\lambda_2
\xrightarrow{d}
\sum_{k\neq2}\frac{\lambda_2\lambda_k}{\lambda_2-\lambda_k}(Z_k^2-1),
\]
with convergence in $L^2$ under the Hilbert-Schmidt condition. Despite its structural similarity to the unequal-block model, symmetry induces a fundamentally different limiting distribution, illustrating the dichotomy underlying Theorem~\ref{thm:ustat}.

\subsection{\texorpdfstring{$\alpha$-H\"older kernels}{alpha-Holder kernels}}

Consider the power-law kernel $W(x,y)=(xy)^\alpha$ for $\alpha\in(0,1)$. The kernel satisfies Assumption~\ref{ass:ustat}. It belongs to $L^2([0,1]^2)$ since
\[
\int_0^1\int_0^1(xy)^{2\alpha}\,dx\,dy
=
\left(\int_0^1 x^{2\alpha}\,dx\right)^2
=
\frac{1}{(2\alpha+1)^2}<\infty,
\]
and it belongs to $L^\infty([0,1]^2)$ with $\|W\|_\infty=1$. The associated operator is compact and positive definite, and its eigenfunctions are continuous and bounded.

The kernel is not Lipschitz. Its partial derivatives satisfy
\[
\frac{\partial W}{\partial x}(x,y)=\alpha y^\alpha x^{\alpha-1},
\]
which diverges as $x\to0$ for $\alpha<1$. Consequently, a Lipschitz bound of the form
\[
|x^\alpha-x'^\alpha|\le L|x-x'|
\]
cannot hold uniformly on $[0,1]$. The leading eigenfunction $\varphi_1$ is strictly positive and non-constant, so that $\Var(\varphi_1(U)^2)>0$. Theorem~\ref{thm:ustat} therefore applies in the non-degenerate regime, yielding a Gaussian central limit theorem for $\lambda_1(\mathbf{K}_n)/(n-1)$.

\section{Discussion}
\label{sec:discussion}

This work develops a distributional theory for individual eigenvalues of dense graphon-based random graphs under minimal $L^2([0,1]^2) \cap L^\infty ([0,1]^2)$ assumptions. Our main result extends the eigenvalue fluctuation dichotomy established by \citet{chatterjee2025fluctuation} beyond Lipschitz kernels and shows that smoothness, while technically convenient in prior analyses, is not intrinsic to the limiting behavior described in Theorem~\ref{thm:ustat}. The key probabilistic mechanism is instead the degeneracy structure induced by the population eigenfunction and the zero-diagonal kernel-matrix normalization.

A central conceptual takeaway is that, at the $\sqrt{n}$ scale, eigenvalue fluctuations are driven primarily by sampling variability in the latent positions rather than by Bernoulli edge noise. In the non-degenerate regime, where $\sigma_r^2=\Var(\varphi_r(U)^2)>0$, this leads to a Gaussian limit with variance $\lambda_r^2\sigma_r^2$, as in Theorem~\ref{thm:ustat}. In the degenerate regime, where $\varphi_r^2\equiv 1$ almost everywhere, the $\sqrt{n}$-scale fluctuations vanish and the centered eigenvalue converges to an explicit weighted chi-square series determined by the spectrum of $T_W$. In stochastic block models, this dichotomy cleanly separates unequal-block settings (typically non-degenerate) from symmetric equal-block settings (degenerate), indicating that the non-Gaussian limit reflects an underlying structural symmetry that is invisible to first-order fluctuations.

From an inferential perspective, the results clarify the sources of uncertainty in spectral procedures for network analysis. In the non-degenerate regime, the asymptotic variance of $\lambda_r(\mathbf K_n)/(n-1)$ is $\lambda_r^2\sigma_r^2/n$, suggesting plug-in estimation strategies for confidence intervals and hypothesis tests for eigenvalue-based methods in latent-position and block models. Moreover, Corollary~\ref{cor:edge_noise} shows that, after normalization, Bernoulli edge noise is asymptotically negligible at the $\sqrt{n}$ scale, so the same distributional limits apply to the observed adjacency matrix $\mathbf A_n$. The degenerate regime offers complementary diagnostic value: detecting $\varphi_r^2\equiv 1$ corresponds to identifying symmetry in the generating structure and can inform model checking and selection.

Several extensions merit further investigation. The approach should extend to joint fluctuations of finitely many simple eigenvalues in the non-degenerate regime, yielding multivariate Gaussian limits with explicit covariance structure. Beyond eigenvalues, combining perturbation expansions with Davis-Kahan-type control may yield fluctuation results for eigenvectors and derived quantities relevant to spectral clustering uncertainty quantification. Sparse and semi-dense regimes, where the graphon or its scaling depends on $n$, require different concentration tools and may exhibit phase transitions in eigenvalue behavior. Additional challenges arise when the target eigenvalue has multiplicity greater than one, in which case perturbation analysis must be carried out at the eigenspace level and the limiting laws are expected to involve weighted quadratic forms over that eigenspace. Finally, it would be natural to adapt the framework to other graph-associated operators, such as normalized Laplacians, though controlling randomness introduced by degree normalization presents further technical difficulties.

Overall, removing smoothness assumptions provides both a technical extension and a conceptual clarification. The persistence of the fluctuation dichotomy under $L^2([0,1]^2) \cap L^\infty ([0,1]^2)$ conditions shows that the limiting distributions are determined by spectral structure and degeneracy rather than kernel regularity. The examples in Section~\ref{sec:examples} highlight that discontinuous block models, non-Lipschitz H\"older kernels, and Brownian-type covariance structures fall within the scope of the theory, substantially enlarging the class of models for which eigenvalue uncertainty quantification is available.

\section{Proof of the main theorem}
\label{sec:tech}
This section contains the proof of Theorem~\ref{thm:ustat}. We follow the strategy outlined in Section~\ref{sec:proof_strategy}: first reduce the eigenvalue fluctuation to a Rayleigh quotient (up to a negligible error) using perturbation theory for simple eigenvalues, and then analyze the resulting quadratic form via a Hoeffding decomposition. The argument splits into two regimes depending on whether the linear Hoeffding projection is nontrivial ($\sigma_r^2>0$) or vanishes identically ($\sigma_r^2=0$).

\begin{proof}[Proof of Theorem~\ref{thm:ustat}]
We treat the non-degenerate and degenerate cases separately, using spectral perturbation theory and U-statistic decompositions.

\textbf{Case 1: Non-degenerate case ($\sigma_r^2 > 0$)} Recall the normalized discretization of eigenfunction and the corresponding scaling as defined in \eqref{eq:normalized_eigenvector} and \eqref{eq:normalization_factor}:,  
\[
\mathbf{u}_r := s_{r,n}^{-1/2}(\varphi_r(U_1),\dots,\varphi_r(U_n))^\top, \qquad
s_{r,n} := \sum_{i=1}^n \varphi_r(U_i)^2.
\]
By Lemma~\ref{lem:approx_eigvec} we have 
\[
\mathbf{K}_n \mathbf{u}_r = (n-1)\lambda_r \mathbf{u}_r + \mathbf{e}_r,\qquad
\|\mathbf{e}_r\|_2 = O_p(\sqrt{n}),\qquad
\|\mathbf{u}_r\|_2=1.
\]
Define the Rayleigh quotient $\eta_n := \mathbf{u}_r^\top \mathbf{K}_n \mathbf{u}_r$ and residual
$\mathbf{r}_n := \mathbf{K}_n\mathbf{u}_r - \eta_n\mathbf{u}_r$. Then
\[
\eta_n
=
\mathbf{u}_r^\top \mathbf{K}_n \mathbf{u}_r
=
(n-1)\lambda_r + \mathbf{u}_r^\top \mathbf{e}_r
=
(n-1)\lambda_r + O_p(\sqrt n),
\]
and
\[
\mathbf{r}_n
=
\mathbf{e}_r - (\mathbf{u}_r^\top\mathbf{e}_r)\mathbf{u}_r,\qquad
\|\mathbf{r}_n\|_2 \le 2\|\mathbf{e}_r\|_2 = O_p(\sqrt n).
\]

By Lemma~\ref{lem:matconc} and the eigengap condition $\gamma_r>0$ for $T_W$, the eigenvalues
of $\mathbf{K}_n/(n-1)$ can be indexed so that
$\lambda_k(\mathbf{K}_n)/(n-1)\xrightarrow{p}\lambda_k$ for each fixed $k$. In particular, with
probability tending to $1$, all eigenvalues of $\mathbf{K}_n$ other than $\lambda_r(\mathbf{K}_n)$
lie at distance at least $c n$ from $(n-1)\lambda_r$ for some $c>0$. On this event, set
\[
\alpha_n := (n-1)\lambda_r - \frac{c n}{2},\qquad
\beta_n := (n-1)\lambda_r + \frac{c n}{2},
\]
so that for all large $n$,
since $\eta_n=(n-1)\lambda_r+o_p(\sqrt{n})$, we have
\[
\alpha_n < \eta_n < \beta_n \text{\,\,and\,\,}
(\alpha_n,\beta_n)\ \text{contains exactly the eigenvalue } \lambda_r(\mathbf{K}_n).
\]
Moreover on this high-probability event, $\eta_n-\alpha_n\asymp n$, $\beta_n-\eta_n\asymp n$. The Kato-Temple inequality for
self-adjoint matrices (see, e.g., Kato~\citet{kato1966perturbation} or \ref{lem:kato-temple}) then yields
\[
\eta_n - \frac{\|\mathbf{r}_n\|_2^2}{\eta_n-\alpha_n}
\;\le\;
\lambda_r(\mathbf{K}_n)
\;\le\;
\eta_n + \frac{\|\mathbf{r}_n\|_2^2}{\beta_n-\eta_n},
\]
so, since $\eta_n-\alpha_n$ and $\beta_n-\eta_n$ are of order $n$ and
$\|\mathbf{r}_n\|_2^2=O_p(n)$,
\[
|\lambda_r(\mathbf{K}_n)-\eta_n|
\le
\frac{\|\mathbf{r}_n\|_2^2}{c'n}
=
O_p(1) = o_p(\sqrt n).
\]
Thus
\[
\lambda_r(\mathbf{K}_n)
=
\eta_n + O_p(1)
=
(n-1)\lambda_r + O_p(\sqrt n).
\] In particular,
\[ \lambda_r(\mathbf{K}_n) = \mathbf{u}_r^\top\mathbf{K}_n\mathbf{u}_r + O_p(\sqrt n), \]

so the eigenvalue can be analyzed via the Rayleigh quotient at $\mathbf{u}_r$.

Using the kernel representation $(\mathbf{K}_n)_{ij} = W(U_i,U_j)\mathbbm{1}\{i \neq j\}$,
\[
\mathbf{u}_r^\top \mathbf{K}_n \mathbf{u}_r
= \frac{1}{s_{r,n}} \sum_{i \neq j} \varphi_r(U_i) W(U_i, U_j) \varphi_r(U_j)
= \frac{1}{s_{r,n}} \sum_{i \neq j} h_r(U_i, U_j),
\]
where $h_r(x,y) := \varphi_r(x)W(x,y)\varphi_r(y)$. For the symmetric kernel $h_r$, the Hoeffding
decomposition (see, e.g., Serfling~\citet[Section 5.3]{serfling2009approximation}) gives
\[
\frac{1}{n(n-1)}\sum_{i \neq j} h_r(U_i, U_j)
= \theta + \frac{2}{n}\sum_{i=1}^n h_1(U_i) + U_n^{(2)},
\]
with 
\begin{itemize} \item $\theta := \mathbb{E}[h_r(U,U')] = \lambda_r$ (as computed in the proof of Lemma~\ref{lem:degU}); \item $h_1(x) := \mathbb{E}[h_r(x,U)] - \theta = \lambda_r\varphi_r(x)^2 - \lambda_r = \lambda_r(\varphi_r(x)^2 - 1)$; \item $U_n^{(2)} := \frac{1}{n(n-1)}\sum_{i \neq j} h_{r,2}(U_i, U_j)$ is the degenerate (second-order) U-statistic with $\mathbb{E}[U_n^{(2)}] = 0$ and $\Var(U_n^{(2)}) = O(n^{-2})$ (by standard U-statistic variance formulas for bounded kernels). \end{itemize}  Thus   \[ \frac{1}{n(n-1)}\sum_{i \neq j} h_r(U_i, U_j) = \lambda_r + \frac{2\lambda_r}{n}\sum_{i=1}^n (\varphi_r(U_i)^2 - 1) + U_n^{(2)}. \]   Define   \[ V_{r,n} := \frac{1}{n}\sum_{i=1}^n (\varphi_r(U_i)^2 - 1), \]   so that   \[ \frac{1}{n(n-1)}\sum_{i \neq j} h_r(U_i, U_j) = \lambda_r + 2\lambda_r V_{r,n} + U_n^{(2)}. \]
By Lemma~\ref{lem:norm}, $s_{r,n} = n(1 + V_{r,n})$, hence
\[
\frac{1}{s_{r,n}} = \frac{1}{n}(1 - V_{r,n} + O_p(n^{-1})).
\]
Therefore,
\begin{align*}
\mathbf{u}_r^\top \mathbf{K}_n \mathbf{u}_r
&= \frac{n(n-1)}{s_{r,n}} \cdot \frac{1}{n(n-1)}\sum_{i \neq j} h_r(U_i, U_j) \\
&= (n-1)(1 - V_{r,n} + O_p(n^{-1})) \cdot (\lambda_r + 2\lambda_r V_{r,n} + U_n^{(2)}) \\
&= (n-1)\lambda_r + (n-1)\lambda_r V_{r,n} + (n-1)U_n^{(2)} + O_p(1),
\end{align*}
where the cross-term $2\lambda_r V_{r,n} - \lambda_r V_{r,n} = \lambda_r V_{r,n}$ yields the
variance cancellation.

Now by construction, $V_{r,n}$ is the average of i.i.d.\ centered variables
$\varphi_r(U_i)^2 - 1$ with
\[
\mathbb{E}[\varphi_r(U)^2 - 1] = 0,\qquad
\Var(\varphi_r(U)^2 - 1) = \Var(\varphi_r(U)^2) = \sigma_r^2 > 0.
\]
Hence, by the classical central limit theorem,
\[
\sqrt{n} V_{r,n} \xrightarrow{d} \mathcal{N}(0, \sigma_r^2).
\]
For the degenerate U-statistic term, $\Var(U_n^{(2)}) = O(n^{-2})$ implies
\[
(n-1)U_n^{(2)} = O_p(1) = o_p(\sqrt{n}).
\]
Combining this with the above expansion of
$\mathbf{u}_r^\top \mathbf{K}_n \mathbf{u}_r$ and the approximation
$\lambda_r(\mathbf{K}_n) = \mathbf{u}_r^\top \mathbf{K}_n \mathbf{u}_r + o_p(\sqrt n)$, we obtain
\[
\lambda_r(\mathbf{K}_n) = (n-1)\lambda_r + (n-1)\lambda_r V_{r,n} + o_p(\sqrt{n}).
\]
Dividing by $\sqrt{n}$,
\[
\frac{\lambda_r(\mathbf{K}_n) - (n-1)\lambda_r}{\sqrt{n}}
= \lambda_r \sqrt{n} V_{r,n} + o_p(1)
\xrightarrow{d} \mathcal{N}(0, \lambda_r^2 \sigma_r^2),
\]
or equivalently,
\[
\sqrt{n}\Bigl(\frac{\lambda_r(\mathbf{K}_n)}{n-1} - \lambda_r\Bigr)
\xrightarrow{d} \mathcal{N}(0, \lambda_r^2 \sigma_r^2),
\]
as claimed.

\textbf{Case 2: Degenerate case ($\sigma_r^2 = 0$, equivalently, $\varphi_r^2 \equiv 1$ a.e.)}
In the degenerate case $\sigma_r^2 = \Var(\varphi_r(U)^2) = 0$, the condition holds if and only if
$\varphi_r(U)^2 = \E[\varphi_r(U)^2] = 1$ almost surely, which is equivalent to
$\varphi_r^2 \equiv 1$ almost everywhere on $[0,1]$. Since $\varphi_r^2 \equiv 1$, we have
$s_{r,n} = \sum_{i=1}^n \varphi_r(U_i)^2 = n$ exactly, and we define the unit vector
\[
\mathbf{u} := n^{-1/2}(\varphi_r(U_1),\dots,\varphi_r(U_n))^\top,
\qquad
\|\mathbf{u}\|^2 = 1.
\]
Let $\mathbf{V} \in \mathbb{R}^{n \times (n-1)}$ have orthonormal columns spanning $\mathbf{u}^\perp$.
By standard perturbation theory for simple eigenvalues (or Lemma~\ref{lem:expansion}), the eigenvalue
$\lambda_r(\mathbf{K}_n)$ then admits the expansion
\begin{equation}\label{expansion}
\begin{split}
\lambda_r(\mathbf{K}_n) &= 
\mathbf{u}^\top \mathbf{K}_n \mathbf{u} \\
&\quad + \mathbf{u}^\top \mathbf{K}_n \mathbf{V} 
\Bigl(
(n-1)\lambda_r \mathbf{I}_{n-1} - \mathbf{V}^\top \mathbf{K}_n \mathbf{V}
\Bigr)^{-1} 
\mathbf{V}^\top \mathbf{K}_n \mathbf{u}.
\end{split}
\end{equation}
For the first term in 
(\ref{expansion}), using the kernel representation and the definition
$h_r(x,y) = \varphi_r(x)W(x,y)\varphi_r(y)$,
\[
\mathbf{u}^\top \mathbf{K}_n \mathbf{u}
= \frac{1}{n}\sum_{i \neq j}\varphi_r(U_i) W(U_i, U_j) \varphi_r(U_j)
= \frac{1}{n}\sum_{i \neq j}h_r(U_i, U_j),
\]
and the Hoeffding decomposition $h_r = h_{r,1} + h_{r,2}$ with $h_{r,1} \equiv \lambda_r$
(Proof of Lemma~\ref{lem:degU}) yields
\[
\frac{1}{n}\sum_{i \neq j}h_r(U_i, U_j)
= \frac{n-1}{n}\lambda_r + \frac{1}{n}\sum_{i \neq j}h_{r,2}(U_i, U_j).
\]
By Lemma~\ref{lem:degU},
\[
\frac{1}{n}\sum_{i \neq j}h_{r,2}(U_i, U_j)
= \sum_{k \neq r}\lambda_k(T_{k,n}^2 - 1) + o_p(1),
\]
where $T_{k,n} := n^{-1/2}\sum_i \varphi_r(U_i)\varphi_k(U_i)$ and $(T_{k,n})$ converges jointly to
i.i.d.\ $\mathcal{N}(0,1)$ variables. Consequently,
\[
\mathbf{u}^\top \mathbf{K}_n \mathbf{u}
= (n-1)\lambda_r + \sum_{k \neq r}\lambda_k(T_{k,n}^2 - 1) + o_p(1).
\]
For the second term in (\ref{expansion}), Lemma~\ref{lem:resolvent} gives
\[
\mathbf{u}^\top \mathbf{K}_n \mathbf{V}\bigl((n-1)\lambda_r \mathbf{I}_{n-1}
- \mathbf{V}^\top\mathbf{K}_n\mathbf{V}\bigr)^{-1}\mathbf{V}^\top\mathbf{K}_n \mathbf{u}
= \sum_{k \neq r}\frac{\lambda_k^2}{\lambda_r - \lambda_k}T_{k,n}^2 + o_p(1).
\]
Combining these two contributions in the expression 
(\ref{expansion}), one obtains
\begin{align*}
\lambda_r(\mathbf{K}_n) &- (n-1)\lambda_r 
= \sum_{k \neq r}\lambda_k(T_{k,n}^2 - 1)
+ \sum_{k \neq r}\frac{\lambda_k^2}{\lambda_r - \lambda_k}T_{k,n}^2 + o_p(1) \\
&= \sum_{k \neq r}\lambda_k(T_{k,n}^2 - 1)
   + \sum_{k\neq r}\frac{\lambda_k^2}{\lambda_r-\lambda_k}(T_{k,n}^2-1)
   + \sum_{k\neq r}\frac{\lambda_k^2}{\lambda_r-\lambda_k}
   + o_p(1) \\
&= \sum_{k \neq r}\left[\lambda_k + \frac{\lambda_k^2}{\lambda_r - \lambda_k}\right](T_{k,n}^2-1)
   + \sum_{k\neq r}\frac{\lambda_k^2}{\lambda_r-\lambda_k}
   + o_p(1).
\end{align*}
Simplifying the coefficient,
\[
\lambda_k + \frac{\lambda_k^2}{\lambda_r - \lambda_k}
= \frac{\lambda_k(\lambda_r - \lambda_k) + \lambda_k^2}{\lambda_r - \lambda_k}
= \frac{\lambda_k\lambda_r}{\lambda_r - \lambda_k},
\]
we arrive at
\[
\lambda_r(\mathbf{K}_n) - (n-1)\lambda_r
=
\sum_{k \neq r}\frac{\lambda_r\lambda_k}{\lambda_r - \lambda_k}(T_{k,n}^2 - 1)
+ C_r + o_p(1),
\]
where the deterministic constant
\[
C_r := \sum_{k\neq r}\frac{\lambda_k^2}{\lambda_r-\lambda_k}
\]
is finite by the Hilbert-Schmidt condition and the eigengap. Thus the centered fluctuation can be written as
\[
\lambda_r(\mathbf{K}_n) - (n-1)\lambda_r - C_r
=
\sum_{k \neq r}\frac{\lambda_r\lambda_k}{\lambda_r - \lambda_k}(T_{k,n}^2 - 1)
+ o_p(1).
\]
Using the joint convergence from the proof of Lemma~\ref{lem:degU}, for any fixed $K$ the vector
$(T_{k,n})_{k \le K, k \neq r}$ converges jointly in distribution to $(Z_k)_{k \le K, k \neq r}$,
where $(Z_k)$ are i.i.d.\ $\mathcal{N}(0,1)$. By the Hilbert-Schmidt condition
$\sum_{k \ge 1}\lambda_k^2 < \infty$ and the eigengap bound $|\lambda_r - \lambda_k| \ge \gamma_r$
for all $k \neq r$,
\[
\sum_{k \neq r}\left(\frac{\lambda_r\lambda_k}{\lambda_r - \lambda_k}\right)^2\Var(Z_k^2-1)
=
2\sum_{k \neq r}\left(\frac{\lambda_r\lambda_k}{\lambda_r - \lambda_k}\right)^2
\le
\frac{2\lambda_r^2}{\gamma_r^2}\sum_{k\neq r}\lambda_k^2
<\infty,
\]
so the series
\[
\sum_{k \neq r}\frac{\lambda_r\lambda_k}{\lambda_r - \lambda_k}(Z_k^2 - 1)
\]
converges in $L^2$. Together with the $L^2$ tail control in Lemmas~\ref{lem:degU} and~\ref{lem:resolvent}, this implies
\[
\lambda_r(\mathbf{K}_n) - (n-1)\lambda_r - C_r
\xrightarrow{d}
\sum_{k \neq r}\frac{\lambda_r\lambda_k}{\lambda_r - \lambda_k}(Z_k^2 - 1),
\]
with convergence of the limit series in $L^2$, and, upon absorbing the deterministic second-order bias $C_r$ into the centering, one obtains the stated non-Gaussian limit in the degenerate case.

\end{proof}

\section*{Acknowledgments}
The author thanks Mohammad S.\,M.\ Moakhar for insightful discussions and valuable suggestions.


\appendix

\section*{Appendix A: Spectral Perturbation Lemmas}
\renewcommand{\thesection}{A.\arabic{section}}
\renewcommand{\thelemma}{A.\arabic{lemma}}

Throughout this appendix, Assumption~\ref{ass:ustat} is in force. Recall that
$W\in L^2([0,1]^2)\cap L^\infty([0,1]^2)$, that $T_W$ is the associated integral
operator, and that $(\lambda_k,\varphi_k)$ denote its eigenpairs.

\begin{lemma}[Approximate eigenpair for the target index $r$]
\label{lem:approx_eigvec}
Under Assumption~\ref{ass:ustat}, define
\[
\mathbf{u}_r := s_{r,n}^{-1/2}\bigl(\varphi_r(U_1),\dots,\varphi_r(U_n)\bigr)^\top,
\qquad
s_{r,n} := \sum_{i=1}^n \varphi_r(U_i)^2 .
\]
Then
\[
\|\mathbf{K}_n \mathbf{u}_r - (n-1)\lambda_r \mathbf{u}_r\|_{2} = O_p(\sqrt{n}),
\qquad
s_{r,n} = n(1 + o_p(1)).
\]
\end{lemma}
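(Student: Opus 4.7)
The plan is to reduce both claims to elementary second-moment estimates. The normalization statement $s_{r,n}=n(1+o_p(1))$ is an immediate application of the weak law of large numbers to the i.i.d.\ sequence $\varphi_r(U_i)^2$, whose common expectation equals $1$ by the $L^2$-normalization \eqref{eq:eigenfunction_normalization}; only the second moment of $\varphi_r(U)$ is used, so the fourth-moment hypothesis of Assumption~\ref{ass:ustat} is not needed here.

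For the operator residual I would write $\mathbf{u}_r = s_{r,n}^{-1/2}\mathbf{v}_r$ with $\mathbf{v}_r=(\varphi_r(U_1),\dots,\varphi_r(U_n))^\top$, and set
\[
\mathbf{w}_r := \mathbf{K}_n\mathbf{v}_r - (n-1)\lambda_r\mathbf{v}_r,
\qquad
\mathbf{K}_n\mathbf{u}_r - (n-1)\lambda_r\mathbf{u}_r = s_{r,n}^{-1/2}\mathbf{w}_r.
\]
The $i$th coordinate reads
\[
(\mathbf{w}_r)_i = \sum_{j\neq i}\bigl[W(U_i,U_j)\varphi_r(U_j) - \lambda_r\varphi_r(U_i)\bigr].
\]
The central observation is that conditional on $U_i$ each summand has mean zero, because $\varphi_r$ is an eigenfunction of $T_W$: $\E[W(U_i,U_j)\varphi_r(U_j)\mid U_i] = (T_W\varphi_r)(U_i) = \lambda_r\varphi_r(U_i)$. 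Hence, conditional on $U_i$, the quantity $(\mathbf{w}_r)_i$ is a centered sum of $n-1$ i.i.d.\ terms whose conditional variance is bounded by $\|W\|_\infty^2\,\E[\varphi_r(U)^2]=\|W\|_\infty^2$, using only $W\in L^\infty$ and the $L^2$-normalization of $\varphi_r$.

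Unconditioning, $\E[(\mathbf{w}_r)_i^2]=(n-1)\cdot O(1)=O(n)$, and by linearity $\E[\|\mathbf{w}_r\|_2^2]=O(n^2)$; Markov's inequality then yields $\|\mathbf{w}_r\|_2=O_p(n)$, and combined with $s_{r,n}^{-1/2}=n^{-1/2}(1+o_p(1))$ from the first step this gives the claimed $\|\mathbf{K}_n\mathbf{u}_r-(n-1)\lambda_r\mathbf{u}_r\|_2=O_p(\sqrt n)$. The substantive mechanism is the conditional-mean cancellation provided by the eigenfunction identity for $T_W$. The only point requiring care is that the coordinates $(\mathbf{w}_r)_i$ are correlated across $i$ through the shared latent variables $U_j$; however, this is not a genuine obstacle, since the bound uses only $\E[\|\mathbf{w}_r\|_2^2]=\sum_i\E[(\mathbf{w}_r)_i^2]$ via linearity of expectation, so no joint-distribution analysis is needed.
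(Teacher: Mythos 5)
Your argument is correct and matches the paper's proof in all essentials: both establish $s_{r,n}=n(1+o_p(1))$ by the law of large numbers, and both bound the residual by observing that conditional on $U_i$ the coordinate $(\mathbf{w}_r)_i$ is a centered sum of i.i.d.\ terms with conditional variance at most $\|W\|_\infty^2$ (the centering coming from the eigenfunction identity $T_W\varphi_r=\lambda_r\varphi_r$), then using linearity of expectation and Markov's inequality. The only cosmetic difference is that you carry out the residual computation at the unnormalized level ($\mathbf{v}_r$) and divide by $s_{r,n}^{1/2}$ at the end, whereas the paper works with the fixed $n^{-1/2}$-scaling and transfers to $\mathbf{u}_r$ afterwards; your side remark that this lemma needs only $\E[\varphi_r(U)^2]<\infty$, not the fourth moment, is also accurate.
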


\begin{proof}
First, since $\E[\varphi_r(U)^2]=1$ and $\{\varphi_r(U_i)^2\}_{i=1}^n$ are i.i.d.,
the law of large numbers gives
\[
s_{r,n}=\sum_{i=1}^n \varphi_r(U_i)^2 = n(1+o_p(1)).
\]

Define the unnormalized vector
\[
\tilde{\mathbf u}_r := n^{-1/2}\bigl(\varphi_r(U_1),\dots,\varphi_r(U_n)\bigr)^\top .
\]
For each $i$,
\[
\Bigl(\frac{\mathbf K_n}{n-1}\tilde{\mathbf u}_r\Bigr)_i
=
\frac{1}{\sqrt n}\cdot \frac{1}{n-1}\sum_{j\ne i} W(U_i,U_j)\varphi_r(U_j).
\]
Let
\[
\Delta_i
:=
\frac{1}{n-1}\sum_{j\ne i} W(U_i,U_j)\varphi_r(U_j)-\lambda_r\varphi_r(U_i).
\]
Then $\E[\Delta_i\mid U_i]=0$. Conditioning on $U_i$, the summands are independent in $j$ and
\[
\E\!\left[ W(U_i,U_j)\varphi_r(U_j)\mid U_i\right]
=(T_W\varphi_r)(U_i)=\lambda_r\varphi_r(U_i).
\]
Moreover,
\[
\Var\!\left(W(U_i,U_j)\varphi_r(U_j)\mid U_i\right)
\le
\E\!\left[W(U_i,U_j)^2\varphi_r(U_j)^2\mid U_i\right]
\le
\|W\|_\infty^2\,\E[\varphi_r(U)^2]
=
\|W\|_\infty^2.
\]
Hence, conditional on $U_i$,
\[
\E[\Delta_i^2\mid U_i]
=
\Var(\Delta_i\mid U_i)
=
\frac{1}{(n-1)^2}\sum_{j\ne i}\Var\!\left(W(U_i,U_j)\varphi_r(U_j)\mid U_i\right)
\le \frac{C}{n-1},
\]
for a constant $C>0$. Therefore,
\[
\E\left\|\frac{\mathbf K_n}{n-1}\tilde{\mathbf u}_r-\lambda_r\tilde{\mathbf u}_r\right\|_2^2
=
\sum_{i=1}^n \E\left[\left(\frac{1}{\sqrt n}\Delta_i\right)^2\right]
=
\frac{1}{n}\sum_{i=1}^n \E[\Delta_i^2]
\le \frac{C}{n}.
\]
Thus
\[
\left\|\frac{\mathbf K_n}{n-1}\tilde{\mathbf u}_r-\lambda_r\tilde{\mathbf u}_r\right\|_2
=O_p(n^{-1/2}),
\]
equivalently,
\[
\|\mathbf K_n\tilde{\mathbf u}_r-(n-1)\lambda_r\tilde{\mathbf u}_r\|_2
=O_p(\sqrt n).
\]
Finally, since $\mathbf u_r=\sqrt{n/s_{r,n}}\,\tilde{\mathbf u}_r=(1+o_p(1))\tilde{\mathbf u}_r$,
the same bound holds with $\mathbf u_r$ in place of $\tilde{\mathbf u}_r$.
\end{proof}

\begin{lemma}[Sample eigengap bound]\label{lem:eigengap}
Under Assumption~\ref{ass:ustat}, let $T_W$ denote the integral operator with
eigenvalues $(\lambda_k)_{k\ge1}$, and assume that $\lambda_r$ is simple and isolated:
\[
\gamma_r := \min_{k\neq r}|\lambda_r-\lambda_k|>0.
\]
Let $\mathbf{K}_n$ be the kernel matrix and set
\[
\mathbf{K}_n^\star := \E\!\left[\mathbf{K}_n\,\middle|\,U_1,\dots,U_n\right].
\]
Assume further that
\[
\max_{1\le k\le n}\left|\frac{\lambda_k(\mathbf K_n^\star)}{n-1}-\lambda_k\right|=o_p(1).
\]
Then there exists a constant $c>0$ such that
\[
\mathbb{P}\!\left(
\min_{k\neq r}
\big|\lambda_r(\mathbf{K}_n) - \lambda_k(\mathbf{K}_n)\big|
\ge c n
\right)\to 1.
\]
\end{lemma}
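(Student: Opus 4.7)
The plan is to transfer the population eigengap $\gamma_r > 0$ of $T_W$ to a sample eigengap of order $n$ for $\mathbf{K}_n$, by combining pointwise eigenvalue convergence with a tail control on the empirical spectrum. Write $\hat\lambda_k := \lambda_k(\mathbf{K}_n)/(n-1)$, and identify sample indices with their population counterparts under the natural matching of the two sequences. The argument naturally splits into two regimes: a finite bulk of indices where the population gap transfers directly via a triangle inequality, and a tail of indices where one must show that the sample eigenvalues are uniformly small in absolute value.

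For the bulk regime, I would invoke Lemma~\ref{lem:matconc} to obtain pointwise eigenvalue convergence $\hat\lambda_k \xrightarrow{p} \lambda_k$ for each fixed $k$, which follows in the standard way from operator-norm concentration of the graphon kernel matrix combined with Weyl's inequality. Choose a truncation index $K$ so that $|\lambda_k| \le \min(|\lambda_r|, \gamma_r)/8$ for all $k > K$; such a $K$ exists because $T_W$ is Hilbert-Schmidt, hence $\lambda_k \to 0$. Then for each $k \in \{1,\dots,K\}\setminus\{r\}$, the triangle inequality gives $|\hat\lambda_r - \hat\lambda_k| \ge \gamma_r - |\hat\lambda_r - \lambda_r| - |\hat\lambda_k - \lambda_k| \ge 3\gamma_r/4$ with probability tending to $1$.

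For the tail regime $k > K$, the key identity is $\sum_{k\ge 1}\hat\lambda_k^2 = \|\mathbf{K}_n/(n-1)\|_{\HS}^2 = (n-1)^{-2}\sum_{i \neq j} W(U_i, U_j)^2$, which converges in probability to $\|W\|_{L^2}^2 = \sum_{k\ge 1}\lambda_k^2$ by the law of large numbers applied to $W^2$. Combined with the pointwise convergence for $k \le K$, this forces $\sum_{k > K} \hat\lambda_k^2 \to \sum_{k > K}\lambda_k^2$, which can be made arbitrarily small by enlarging $K$. Consequently $\max_{k > K}|\hat\lambda_k|$ can be controlled below $\min(|\lambda_r|,\gamma_r)/4$ with high probability, which gives $|\hat\lambda_r - \hat\lambda_k| \ge |\hat\lambda_r| - |\hat\lambda_k| \ge |\lambda_r|/2$ for all $k > K$. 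Combining the two regimes and multiplying through by $(n-1)$ yields the claimed bound with constant $c = \min(\gamma_r,|\lambda_r|)/8$. The main obstacle is precisely this tail step: pointwise eigenvalue convergence alone is insufficient, and one must upgrade to a uniform bound via Hilbert-Schmidt-level control of the empirical spectrum; should Lemma~\ref{lem:matconc} already furnish operator-norm convergence to a finite-rank comparison operator, the same conclusion follows more directly from Weyl's inequality.
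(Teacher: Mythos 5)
Your proposal is correct, but it takes a more circuitous route than the paper. The paper's proof is essentially one line: the proof of Lemma~\ref{lem:matconc} establishes the \emph{operator-norm} bound $\|\tfrac{1}{n-1}\mathbf{K}_n - T_W\|_{\op} = O_p(n^{-1/2})$ (via the Koltchinskii--Gin\'e concentration result), and Weyl's inequality then gives $|\hat\lambda_k - \lambda_k| \le \|\tfrac{1}{n-1}\mathbf{K}_n - T_W\|_{\op}$ \emph{uniformly in $k$}, so $\min_{k\neq r}|\hat\lambda_r - \hat\lambda_k| \ge \gamma_r - 2\|\cdot\|_{\op} \ge \gamma_r/2$ with probability tending to one, yielding $c = \gamma_r/4$ after rescaling by $n-1$. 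Your bulk/tail decomposition --- triangle inequality for $k\le K$ via pointwise convergence, Hilbert--Schmidt control of $\sum_{k>K}\hat\lambda_k^2$ for the tail --- is a legitimate fallback that only needs pointwise eigenvalue convergence plus convergence of the empirical HS norm, and would be necessary if Lemma~\ref{lem:matconc} delivered nothing stronger than fixed-index convergence as its \emph{statement} literally reads. You correctly anticipate this in your closing remark: since the paper's Lemma~\ref{lem:matconc} does furnish operator-norm control in its proof, the direct Weyl argument suffices and is simpler. One minor caveat with your version: the index matching between the sample spectrum (ordered by value) and the population spectrum (ordered by $|\lambda_k|$) needs to be handled with some care when $T_W$ has negative eigenvalues, whereas the paper's uniform Weyl bound sidesteps this because it controls every signed eigenvalue gap simultaneously; your constant $\min(\gamma_r,|\lambda_r|)/8$ is also more conservative than the paper's $\gamma_r/4$, though both are fine for the lemma.
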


\begin{proof}
By Lemma~\ref{lem:matconc},
\[
\Big\|\frac{1}{n-1}\mathbf{K}_n - \frac{1}{n-1}\mathbf{K}_n^\star\Big\|_{\op} = O_p(n^{-1/2}).
\]
By Weyl's inequality for self-adjoint matrices, for each $k\in[n]$,
\[
\Big|
\frac{\lambda_k(\mathbf{K}_n)}{n-1} - \frac{\lambda_k(\mathbf{K}_n^\star)}{n-1}
\Big|
\le
\Big\|\frac{1}{n-1}\mathbf{K}_n - \frac{1}{n-1}\mathbf{K}_n^\star\Big\|_{\op}.
\]
Combining with the assumption
$\max_{1\le k\le n}\left|\lambda_k(\mathbf K_n^\star)/(n-1)-\lambda_k\right|=o_p(1)$,
we obtain that with probability $1-o(1)$,
\[
\Big|
\frac{\lambda_k(\mathbf{K}_n)}{n-1} - \lambda_k
\Big|
\le \frac{\gamma_r}{4}
\qquad\text{for all }k\in[n].
\]
Therefore, on this event,
\[
\min_{k\neq r}
\Big|
\frac{\lambda_r(\mathbf{K}_n)}{n-1} - \frac{\lambda_k(\mathbf{K}_n)}{n-1}
\Big|
\ge
\min_{k\neq r}|\lambda_r-\lambda_k| - 2\cdot\frac{\gamma_r}{4}
=
\frac{\gamma_r}{2}.
\]
Multiplying by $(n-1)\ge n/2$ for large $n$ yields
\[
\min_{k\neq r}
|\lambda_r(\mathbf{K}_n) - \lambda_k(\mathbf{K}_n)|
\ge
\frac{\gamma_r}{4}\,n
\]
with probability $1-o(1)$, completing the proof.
\end{proof}

\begin{lemma}[Kato-Temple inequality for symmetric matrices]
\label{lem:kato-temple}
Let $\mathbf{A}\in\mathbb{R}^{n\times n}$ be symmetric, and let
$\mathbf{u}\in\mathbb{R}^n$ satisfy $\|\mathbf{u}\|_2=1$.
Define the Rayleigh quotient and residual
\[
\eta := \mathbf{u}^\top \mathbf{A}\mathbf{u},
\qquad
\mathbf{r} := \mathbf{A}\mathbf{u}-\eta\,\mathbf{u}.
\]
Suppose there exist real numbers $\alpha<\eta<\beta$ such that the open interval
$(\alpha,\beta)$ contains exactly one eigenvalue $\lambda$ of $\mathbf{A}$
(counting multiplicity).
Then
\[
\eta - \frac{\|\mathbf{r}\|_2^2}{\eta-\alpha}
\le
\lambda
\le
\eta + \frac{\|\mathbf{r}\|_2^2}{\beta-\eta}.
\]
In particular, if $\eta-\alpha$ and $\beta-\eta$ are of order $n$ and
$\|\mathbf{r}\|_2=o(\sqrt{n})$, then $|\lambda-\eta|=o(1)$.
\end{lemma}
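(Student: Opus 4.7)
My plan is to adapt the classical spectral-decomposition proof of the Kato-Temple inequality to the finite symmetric-matrix setting, and then read off the stated $o_p(1)$ consequence as a quantitative corollary.

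First, I would diagonalize $\mathbf{A}$ in an orthonormal eigenbasis, $\mathbf{A} = \sum_i \mu_i \mathbf{v}_i \mathbf{v}_i^\top$, let $i_0$ denote the index with $\mu_{i_0}=\lambda$, and expand $\mathbf{u} = \sum_i c_i \mathbf{v}_i$ with $\sum_i c_i^2 = 1$. This yields three book-keeping identities: $\eta = \sum_i c_i^2 \mu_i$, $\|\mathbf{r}\|_2^2 = \sum_i c_i^2 (\mu_i-\eta)^2$, and $\sum_i c_i^2 (\mu_i-\eta) = 0$ (the last from $\mathbf{u}^\top \mathbf{r} = 0$). The isolation hypothesis $\spec(\mathbf{A}) \cap (\alpha,\beta)=\{\lambda\}$ translates into the pointwise sign condition $(\mu_i-\alpha)(\mu_i-\beta) \ge 0$ for every $i \ne i_0$.

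The core algebraic step is the identity
\[
\mathbf{u}^\top (\mathbf{A}-\alpha I)(\mathbf{A}-\beta I)\mathbf{u}
= \|\mathbf{r}\|_2^2 + (\eta-\alpha)(\eta-\beta),
\]
obtained by expanding the product and using $\mathbf{u}^\top \mathbf{A}^2 \mathbf{u} = \eta^2 + \|\mathbf{r}\|_2^2$. The left-hand side equals $\sum_i c_i^2 (\mu_i-\alpha)(\mu_i-\beta)$, and the sign condition forces every $i \ne i_0$ summand to be non-negative. Splitting off the $i_0$ contribution and separating the cases $\lambda \ge \eta$ and $\lambda \le \eta$, I expect to obtain a Temple-type inequality of the form $(\text{positive gap}) \cdot |\lambda-\eta| \le \|\mathbf{r}\|_2^2$; dividing by the relevant gap factor then gives both one-sided bounds in the statement. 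In each regime, the companion bound is vacuous because one of its sides is non-negative while the other is non-positive. The ``in particular'' clause is then immediate: under the hypotheses $\eta - \alpha \asymp n$, $\beta - \eta \asymp n$, and $\|\mathbf{r}\|_2 = o(\sqrt n)$, the two-sided bound collapses to $|\lambda-\eta| \le \|\mathbf{r}\|_2^2 / \min(\eta-\alpha,\beta-\eta) = o(1)$, which matches the stated $o_p(1)$ conclusion once the enveloping hypotheses hold with probability tending to one (as arranged in Case~1 of Theorem~\ref{thm:ustat}).

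The principal obstacle is the sign bookkeeping: because both $\eta$ and $\lambda$ sit inside the open interval $(\alpha,\beta)$, each of the products $(\eta-\alpha)(\eta-\beta)$ and $(\lambda-\alpha)(\lambda-\beta)$ is negative, and the correct Temple-style conclusion must be pulled out of the non-negativity of the off-diagonal sum $\sum_{i \ne i_0} c_i^2 (\mu_i-\alpha)(\mu_i-\beta) \ge 0$ only after these negative contributions have been carefully tracked and absorbed. Once the analysis is split into the two regimes $\lambda \ge \eta$ and $\lambda \le \eta$, the sign-tracking becomes routine and both inequalities follow.
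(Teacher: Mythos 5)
The paper itself does not prove this lemma; it simply cites \citet[Theorems~V.4.10--V.4.11]{kato1995perturbation}. So any genuine argument you supply is already more than the paper offers. Your spectral-decomposition setup and the three bookkeeping identities are all correct, and the identity
\[
\mathbf u^\top(\mathbf A-\alpha I)(\mathbf A-\beta I)\mathbf u
=\|\mathbf r\|_2^2+(\eta-\alpha)(\eta-\beta)
=\sum_i c_i^2(\mu_i-\alpha)(\mu_i-\beta)
\]
is valid. The difficulty is not just ``sign bookkeeping'': this single identity is insufficient, and the step where you ``expect to obtain a Temple-type inequality'' has a genuine gap.

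The issue is the surviving factor $c_{i_0}^2$. After splitting off $i_0$ and using $\sum_{i\neq i_0}c_i^2(\mu_i-\alpha)(\mu_i-\beta)\ge 0$, the best you get is
\[
\|\mathbf r\|_2^2 \;\ge\; c_{i_0}^2(\lambda-\alpha)(\lambda-\beta)-(\eta-\alpha)(\eta-\beta),
\]
and since $(\lambda-\alpha)(\lambda-\beta)<0$, even the crude bound $c_{i_0}^2\le 1$ only yields $\|\mathbf r\|_2^2\ge (\lambda-\alpha)(\lambda-\beta)-(\eta-\alpha)(\eta-\beta)$. That right-hand side is $f(\lambda)-f(\eta)$ for the convex quadratic $f(x)=(x-\alpha)(x-\beta)$, and since $f'$ changes sign inside $(\alpha,\beta)$ this difference has no definite sign; the inequality is vacuous precisely in the regime where $\lambda$ sits near the vertex of $f$. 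There is no way to extract $|\lambda-\eta|\le \|\mathbf r\|_2^2/(\text{gap})$ from this, nor any lower bound on $c_{i_0}^2$ available from the hypotheses. The classical Kato--Temple trick is to use the products $(\mathbf A-\lambda I)(\mathbf A-\alpha I)$ and $(\mathbf A-\lambda I)(\mathbf A-\beta I)$: the factor $(\mathbf A-\lambda I)$ kills the $i_0$ term outright (it contributes exactly zero rather than a negative quantity weighted by $c_{i_0}^2$), while for $i\ne i_0$ the sign conditions $\mu_i\le\alpha<\lambda$ or $\mu_i\ge\beta>\lambda$ still make each summand nonnegative. This yields $0\le\sigma^2+(\eta-\lambda)(\eta-\alpha)$ and $0\le\sigma^2+(\eta-\lambda)(\eta-\beta)$, from which the two one-sided bounds follow directly with no trace of $c_{i_0}^2$. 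Incidentally, this derivation produces $\lambda\le\eta+\sigma^2/(\eta-\alpha)$ and $\lambda\ge\eta-\sigma^2/(\beta-\eta)$, i.e.\ with the denominators interchanged relative to the lemma as stated in the paper; this is immaterial for the $o_p(1)$ conclusion since both gaps are of order $n$, but worth flagging.
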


\begin{proof}
This is a finite-dimensional specialization of the Kato--Temple inequality.
Specifically, equation (10) in \citet{Kato1949} states that if the interval
$(\alpha,\beta)$ contains no point of the spectrum except at most one
(non-degenerate) eigenvalue and
\[
\varepsilon^2 < (\eta-\alpha)(\beta-\eta),
\]
then
\[
\eta - \frac{\varepsilon^2}{\beta-\eta}
\le
\lambda
\le
\eta + \frac{\varepsilon^2}{\eta-\alpha}.
\]
Identifying $\varepsilon=\|\mathbf r\|_2$ yields the stated bounds.
\end{proof}

\begin{lemma}[Kernel matrix eigenvalue scaling in $(n-1)$ normalization]
\label{lem:matconc}
Under Assumption~\ref{ass:ustat}, let
\[
(\mathbf{K}_n)_{ij} := W(U_i,U_j)\,\mathbbm{1}\{i\neq j\},
\qquad
U_1,\dots,U_n \stackrel{\mathrm{iid}}{\sim}\mathrm{Unif}[0,1].
\]
Let $T_W$ be the integral operator on $L^2([0,1])$ with kernel $W$ and eigenvalues
$\lambda_1(T_W)\ge \lambda_2(T_W)\ge \cdots$.
Fix $r$ as in Assumption~\ref{ass:ustat}. Then
\[
\lambda_r(\mathbf{K}_n) = (n-1)\lambda_r + O_p(\sqrt n).
\]
In particular,
\[
\frac{\lambda_r(\mathbf{K}_n)}{n-1}\xrightarrow{p}\lambda_r,
\qquad
\text{and}
\qquad
\left|\frac{\lambda_r(\mathbf{K}_n)}{n-1}-\lambda_r \right|=O_p(n^{-1/2}).
\]
\end{lemma}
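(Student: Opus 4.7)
The plan is to obtain the $\sqrt{n}$-scale eigenvalue concentration by comparing $\mathbf{K}_n$ to $T_W$ in two decoupled stages: first through an auxiliary full kernel matrix (with diagonal included), for which classical spectral concentration is available, and then by correcting for the removed diagonal via a Weyl perturbation bound. Decoupling the two sources of discrepancy in this way keeps each step clean and avoids a direct matrix-versus-operator comparison at $\mathbf{K}_n$ itself.

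First, I would introduce $\mathbf{M}_n \in \mathbb{R}^{n\times n}$ with $(\mathbf{M}_n)_{ij} := W(U_i,U_j)$ for every pair $i,j$, i.e., the classical Gram-type kernel matrix with the diagonal included. Under $W \in L^\infty([0,1]^2)$, the Koltchinskii--Gin\'e framework for spectra of random kernel matrices (see \cite{koltchinskii2000random, braun2006accurate}) gives, for each fixed index $k$,
\[
\left|\lambda_k(\mathbf{M}_n)/n - \lambda_k(T_W)\right| = O_p(n^{-1/2}),
\]
which I would cite rather than reprove. In particular, taking $k=r$ yields $\lambda_r(\mathbf{M}_n) = n\lambda_r + O_p(\sqrt n)$.

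Next, I would write $\mathbf{K}_n = \mathbf{M}_n - D_n$, where $D_n := \mathrm{diag}(W(U_1,U_1),\dots,W(U_n,U_n))$ has operator norm $\|D_n\|_{\op} = \max_i |W(U_i,U_i)| \le \|W\|_\infty$. Weyl's inequality for self-adjoint matrices then yields $|\lambda_r(\mathbf{K}_n) - \lambda_r(\mathbf{M}_n)| \le \|D_n\|_{\op} = O(1)$. Combining the two steps,
\[
\lambda_r(\mathbf{K}_n) = n\lambda_r + O_p(\sqrt n) = (n-1)\lambda_r + O_p(\sqrt n),
\]
which is the stated bound; dividing through by $n-1$ immediately delivers both $\lambda_r(\mathbf{K}_n)/(n-1) \xrightarrow{p} \lambda_r$ and the quantitative $O_p(n^{-1/2})$ rate in the final display of the lemma.

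The main obstacle is the kernel-matrix eigenvalue concentration in the first step. If this is not treated as a black box, a self-contained derivation would realize $\mathbf{M}_n/n$ as (the nonzero-eigenvalue part of) the empirical integral operator $T_{W,P_n}$ on $L^2(P_n)$, control the Hilbert--Schmidt distance $\|T_{W,P_n} - T_W\|_{\HS}$ through a centered U-statistic whose bounded summands admit a Hoeffding-type tail, and then invoke the Hoffman--Wielandt inequality to transfer Hilbert--Schmidt concentration into individual eigenvalue concentration. Under $W \in L^2 \cap L^\infty$, this machinery delivers the $O_p(n^{-1/2})$ rate for each fixed $k$ without any further smoothness or regularity assumption, which is exactly what is needed to match the hypotheses of the lemma.
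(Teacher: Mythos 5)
Your proposal is correct and is essentially the paper's own argument: introduce the full (diagonal-included) Gram matrix, invoke the Koltchinskii--Gin\'e concentration result under $W\in L^\infty$, control the removed diagonal in operator norm by $\|W\|_\infty$, and transfer to eigenvalues via Weyl's inequality. The only cosmetic difference is ordering—the paper first assembles the operator-norm bound $\|(n-1)^{-1}\mathbf K_n - T_W\|_{\op}=O_p(n^{-1/2})$ (including the $1/n$ versus $1/(n-1)$ normalization correction) and then applies Weyl once at the end, whereas you pass to eigenvalue statements earlier and apply Weyl to the diagonal correction separately; both routes are equivalent.
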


\begin{proof}
Let $\widetilde{\mathbf K}_n$ be the full kernel matrix with entries
$(\widetilde{\mathbf K}_n)_{ij}=W(U_i,U_j)$.
Then
\[
\mathbf K_n
=
\widetilde{\mathbf K}_n
-
\mathrm{diag}\big(W(U_1,U_1),\dots,W(U_n,U_n)\big).
\]
Since the difference is diagonal,
\[
\|\mathbf K_n-\widetilde{\mathbf K}_n\|_{\op}
=
\max_{1\le i\le n}|W(U_i,U_i)|
\le \|W\|_\infty,
\]
and therefore
\[
\Big\|\frac{1}{n-1}\mathbf K_n-\frac{1}{n-1}\widetilde{\mathbf K}_n\Big\|_{\op}
\le \frac{\|W\|_\infty}{n-1}
= o(n^{-1/2}).
\]

We view $(1/n)\widetilde{\mathbf K}_n$ as the empirical kernel integral operator associated
with the empirical measure $n^{-1}\sum_{i=1}^n \delta_{U_i}$.
By Koltchinskii and Giné (2000, Theorem~2.1), for a bounded measurable kernel
$W\in L^\infty([0,1]^2)$ and i.i.d.\ design points $U_i\sim\mathrm{Unif}[0,1]$,
\[
\Big\|\frac{1}{n}\widetilde{\mathbf K}_n-T_W\Big\|_{\op}
=O_p(n^{-1/2}).
\]

We next compare the normalizations $1/n$ and $1/(n-1)$:
\[
\Big\|\frac{1}{n-1}\widetilde{\mathbf K}_n-\frac{1}{n}\widetilde{\mathbf K}_n\Big\|_{\op}
=
\Big|\frac{1}{n-1}-\frac{1}{n}\Big|\,\|\widetilde{\mathbf K}_n\|_{\op}.
\]
Since $|(\widetilde{\mathbf K}_n)_{ij}|\le \|W\|_\infty$,
$\|\widetilde{\mathbf K}_n\|_{\op}\le n\|W\|_\infty$, and thus
\[
\Big\|\frac{1}{n-1}\widetilde{\mathbf K}_n-\frac{1}{n}\widetilde{\mathbf K}_n\Big\|_{\op}
\le \frac{\|W\|_\infty}{n-1}
=O(n^{-1})
=o(n^{-1/2}).
\]
Combining the last two displays yields
\[
\Big\|\frac{1}{n-1}\widetilde{\mathbf K}_n-T_W\Big\|_{\op}
=O_p(n^{-1/2}).
\]
Together with the diagonal-removal bound, we conclude
\[
\Big\|\frac{1}{n-1}\mathbf K_n-T_W\Big\|_{\op}
=O_p(n^{-1/2}).
\]

Finally, Weyl’s inequality (equivalently, the min-max characterization of eigenvalues)
applies to self-adjoint operators, giving for each fixed $r$,
\[
\Big|\frac{\lambda_r(\mathbf K_n)}{n-1}-\lambda_r\Big|
\le
\Big\|\frac{1}{n-1}\mathbf K_n-T_W\Big\|_{\op}
=O_p(n^{-1/2}).
\]
Multiplying both sides by $(n-1)$ gives
\[
\lambda_r(\mathbf K_n)
=
(n-1)\lambda_r
+
O_p(\sqrt n),
\]
which completes the proof.
\end{proof}

\begin{lemma}[Resolvent correction in the degenerate regime]\label{lem:resolvent}
Assume $\varphi_r^2 \equiv 1$ almost everywhere. Let 
\[
\mathbf{u} := n^{-1/2}(\varphi_r(U_1),\dots,\varphi_r(U_n))^\top
\]
and let $\mathbf{V} \in \mathbb{R}^{n \times (n-1)}$ have orthonormal 
columns spanning $\mathbf{u}^\perp$. 
Let $T_{k,n} := n^{-1/2}\sum_i \varphi_r(U_i)\varphi_k(U_i)$ for $k \neq r$. Then
\[
\mathbf{u}^\top \mathbf{K}_n \mathbf{V}\bigl((n-1)\lambda_r \mathbf{I}_{n-1} - \mathbf{V}^\top\mathbf{K}_n\mathbf{V}\bigr)^{-1}\mathbf{V}^\top\mathbf{K}_n \mathbf{u}
=
\sum_{k \neq r}\frac{\lambda_k^2}{\lambda_r - \lambda_k}T_{k,n}^2 + o_p(1).
\]
\end{lemma}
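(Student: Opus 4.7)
The plan is to expand the quadratic form by diagonalizing $\mathbf{V}^\top\mathbf{K}_n\mathbf{V}$ in an approximate basis built from the eigenfunction discretizations, identify the dominant mode-by-mode contributions, and control the high-frequency tail via the Hilbert--Schmidt condition and the eigengap. Introduce $\mathbf{v}_k := n^{-1/2}(\varphi_k(U_1),\dots,\varphi_k(U_n))^\top$ for each $k \ge 1$, noting that $\mathbf{v}_r = \mathbf{u}$ under the degeneracy $\varphi_r^2 \equiv 1$, and set $\mathbf{w}_k := \mathbf{V}^\top \mathbf{v}_k$ for $k \ne r$. Since $\mathbf{V}\mathbf{V}^\top = I_n - \mathbf{u}\mathbf{u}^\top$, the law of large numbers applied to $\varphi_j(U)\varphi_k(U)$ together with $L^2$-orthonormality of $(\varphi_k)$ yields $\mathbf{v}_j^\top\mathbf{v}_k = \delta_{jk} + O_p(n^{-1/2})$, hence $\mathbf{w}_j^\top\mathbf{w}_k = \delta_{jk} + O_p(n^{-1/2})$ for fixed $j, k \ne r$.

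The key computational input is a bilinear U-statistic estimate for $\mathbf{v}_j^\top \mathbf{K}_n \mathbf{v}_k = n^{-1}\sum_{i \ne \ell}\varphi_j(U_i) W(U_i,U_\ell)\varphi_k(U_\ell)$. A Hoeffding decomposition of the symmetrized kernel (valid since $W \in L^\infty$) shows that this bilinear form has mean $(n-1)\lambda_k \delta_{jk}$ and fluctuations of order $\sqrt{n}$; for $j = r$, $k \ne r$ the linear Hoeffding projection equals $\tfrac{1}{2}(\lambda_r + \lambda_k)\varphi_r(x)\varphi_k(x)$, which yields $\mathbf{u}^\top \mathbf{K}_n \mathbf{v}_k = \sqrt{n}(\lambda_r + \lambda_k) T_{k,n} + O_p(1)$. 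Combining this with $\mathbf{u}^\top \mathbf{K}_n \mathbf{u} = (n-1)\lambda_r + O_p(1)$ from the degenerate Case~2 analysis, the vector $\mathbf{y} := \mathbf{V}^\top \mathbf{K}_n \mathbf{u}$ satisfies
\[
\mathbf{w}_k^\top \mathbf{y} = \mathbf{v}_k^\top \mathbf{K}_n \mathbf{u} - (\mathbf{v}_k^\top \mathbf{u})(\mathbf{u}^\top \mathbf{K}_n \mathbf{u}) = \sqrt{n}\,\lambda_k T_{k,n} + O_p(1),
\]
the $\lambda_r$ contributions cancelling exactly so that the surviving coefficient is $\lambda_k$.

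For each fixed $K$, the low-frequency block $\mathcal{W}_K := \mathrm{span}\{\tilde\mathbf{w}_k := \mathbf{w}_k/\|\mathbf{w}_k\| : k \le K,\, k \ne r\}$ admits an approximate diagonalization of $\mathbf{V}^\top \mathbf{K}_n \mathbf{V}$. The same U-statistic estimate applied to $\mathbf{w}_j^\top (\mathbf{V}^\top \mathbf{K}_n \mathbf{V}) \mathbf{w}_k$ shows that in this basis $(n-1)^{-1} \mathbf{V}^\top \mathbf{K}_n \mathbf{V}$ equals $\mathrm{diag}(\lambda_k) + O_p(n^{-1/2})$; since $\gamma_r > 0$, a standard matrix-perturbation argument produces a restricted resolvent at $\lambda_r$ equal to $(n-1)^{-1} \mathrm{diag}(1/(\lambda_r - \lambda_k)) + o_p(n^{-1})$. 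Substituting the previous expression for $\mathbf{w}_k^\top \mathbf{y}$ gives the low-mode contribution $\sum_{k \le K,\, k \ne r} \lambda_k^2 T_{k,n}^2/(\lambda_r - \lambda_k) + o_{p,K}(1)$, matching the claimed limit truncated at $K$.

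The hardest step, and the main obstacle, is uniform tail control over $k > K$. Here I would combine the global operator-norm bound $\|M\|_{\op} = O_p(n^{-1})$ from Lemma~\ref{lem:eigengap} with the moment identity $\E[T_{k,n}^2] = 1$ (valid since $\varphi_r^2 \equiv 1$) and the Hilbert--Schmidt condition to obtain
\[
\E\!\sum_{k > K}\frac{\lambda_k^2}{|\lambda_r - \lambda_k|} T_{k,n}^2 \le \gamma_r^{-1}\sum_{k > K}\lambda_k^2 \xrightarrow{K \to \infty} 0,
\]
so the target series converges in $L^2$. On the matrix side, the component of $\mathbf{y}$ in $\mathcal{W}_K^\perp$ concentrates on eigenmodes of $\mathbf{V}^\top \mathbf{K}_n \mathbf{V}/(n-1)$ clustered near $\{\lambda_k : k > K\}$, and a parallel $L^2$ bound using the same Hilbert--Schmidt and eigengap estimates forces this tail to vanish as $K \to \infty$ uniformly in $n$. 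A diagonal argument letting $K \to \infty$ together with $n \to \infty$ then identifies the full sum and completes the proof.
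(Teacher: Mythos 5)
Your proposal is correct, and while the high-level scaffolding (approximate diagonalization of $\mathbf{V}^\top\mathbf{K}_n\mathbf{V}$ in the $\varphi_k$-basis, finite-rank truncation, $L^2$ tail control via $\sum_{k>K}\lambda_k^2\to0$ and Markov) matches the paper's, you take a genuinely different and cleaner route to the key bilinear estimate. The paper computes $\mathbf{u}^\top\mathbf{K}_n\mathbf{u}_k$ via the approximate eigenvalue equation $\mathbf{K}_n\mathbf{u}_k=(n-1)\lambda_k\mathbf{u}_k+\mathbf{e}_k$ and then discards $\mathbf{u}^\top\mathbf{e}_k$ using the crude bound $|\mathbf{u}^\top\mathbf{e}_k|\le\|\mathbf{e}_k\|=O_p(\sqrt n)$; but the claimed main term $(n-1)\lambda_k\,\mathbf{u}^\top\mathbf{u}_k$ is itself $O_p(\sqrt n)$, so this bound alone does not justify the asymptotic $\mathbf{u}^\top\mathbf{K}_n\mathbf{u}_k\approx\sqrt n\,\lambda_k T_{k,n}$. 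Your bilinear Hoeffding decomposition identifies the exact linear projection of the symmetrized kernel $\tfrac12[\varphi_r(x)W(x,y)\varphi_k(y)+\varphi_k(x)W(x,y)\varphi_r(y)]$ as $\tfrac12(\lambda_r+\lambda_k)\varphi_r(x)\varphi_k(x)$, yielding the sharper $\mathbf{u}^\top\mathbf{K}_n\mathbf{v}_k=\sqrt n(\lambda_r+\lambda_k)T_{k,n}+O_p(1)$; you then explicitly subtract $(\mathbf{v}_k^\top\mathbf{u})(\mathbf{u}^\top\mathbf{K}_n\mathbf{u})\approx\sqrt n\,\lambda_r T_{k,n}$ when projecting onto $\mathbf{u}^\perp$, exhibiting the precise cancellation that leaves $\sqrt n\,\lambda_k T_{k,n}$. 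This makes rigorous a step that the paper's exposition handles only implicitly (the paper's asymptotic is what emerges after the $\mathbf{V}\mathbf{V}^\top$ projection, not for the raw inner product $\mathbf{u}^\top\mathbf{K}_n\mathbf{u}_k$), so your derivation is actually the more careful one. The tail control on the matrix side of the truncation remains heuristic in both treatments, but your overall argument is sound.
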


\begin{proof}
We approximate the sandwich product using the discretized eigenvectors $\{\mathbf u_k\}$ and a finite-rank truncation, then control the tail via $\sum_k\lambda_k^2<\infty$.

Fix the approximate eigenvectors $\mathbf u_k$ as in Lemma~\ref{lem:approx_eigvec}, and for each fixed $K$ consider the family $\{\mathbf u_k\}_{k\le K,k\neq r}$, which is asymptotically orthonormal and lies in $\mathbf u^\perp$ up to $o_p(1)$ by Lemma~\ref{lem:approx_eigvec}. By Gram-Schmidt, choose $\mathbf V$ so that, for each fixed $K$, its first $K-1$ columns agree with an orthonormalization of $\{\mathbf u_k\}_{k\le K,k\neq r}$, noting that the quadratic form
\[
\mathbf{u}^\top \mathbf{K}_n \mathbf{V}A^{-1}\mathbf{V}^\top\mathbf{K}_n \mathbf{u}
\]
depends only on the subspace $\mathbf u^\perp$ and $A$, not on the specific orthonormal basis $\mathbf V$, so this choice is without loss of generality. By Lemma~\ref{lem:matconc} and the simple eigengap at $\lambda_r$, the eigenvalues of $(n-1)^{-1}\mathbf K_n$ in $\mathbf u^\perp$ concentrate around $\{\lambda_k\}_{k\neq r}$, and hence
\[
\text{dist}\!\left(\lambda_r,\text{spec}\!\left(\frac{1}{n-1}\mathbf V^\top\mathbf K_n\mathbf V\right)\right)\ge \frac{\gamma_r}{2}
\]
with probability $1-o(1)$, so on this event
\[
\left\|\bigl((n-1)\lambda_r \mathbf{I}_{n-1} - \mathbf{V}^\top\mathbf{K}_n\mathbf{V}\bigr)^{-1}\right\|_{\op} 
\le \frac{2}{(n-1)\gamma_r}
=O(n^{-1}).
\]

For $k\ge1$, define
\[
\mathbf{u}_k:=s_{k,n}^{-1/2}\big(\varphi_k(U_1),\dots,\varphi_k(U_n)\big)^\top,
\qquad
s_{k,n}:=\sum_{i=1}^n\varphi_k(U_i)^2,
\]
so that $s_{k,n}=n(1+o_p(1))$ by Lemma~\ref{lem:norm}, and Lemma~\ref{lem:approx_eigvec} gives $\mathbf u_k^\top\mathbf u_\ell=\delta_{k\ell}+o_p(1)$ and
\[
\mathbf K_n\mathbf u_k=(n-1)\lambda_k\mathbf u_k+\mathbf e_k,
\qquad
\|\mathbf e_k\|=O_p(\sqrt n)
\quad\text{for each fixed }k.
\]
For $k\neq r$, the cross–projection scaling
\[
\mathbf u^\top\mathbf u_k
=
\frac{1}{\sqrt{ns_{k,n}}}\sum_{i=1}^n\varphi_r(U_i)\varphi_k(U_i)
=
\frac{1}{\sqrt n}\,(1+o_p(1))\,T_{k,n}
\]
holds. Fix $K<\infty$ with $K>r$ and define the finite–rank approximation
\[
\mathbf M_K:=\sum_{\substack{k\le K\\ k\neq r}}(n-1)\lambda_k\,\mathbf u_k\mathbf u_k^\top = \mathbf M_K=(n-1)\mathbf U\, \mathrm{diag}(\lambda_k)_{k\le K,\,k\neq r}\, \mathbf U^\top,
\]
where  $\mathbf U:=[\mathbf u_k]_{k\le K,\,k\neq r}\in\mathbb R^{n\times (K-1)}$.

In the approximately orthonormal basis $\{\mathbf u_k\}_{k\le K,\,k\neq r}$ we have

\[
\Big\|
\bigl((n-1)\lambda_r I-\mathbf M_K\bigr)^{-1}
-
\sum_{\substack{k\le K\\ k\neq r}}
\frac{1}{(n-1)(\lambda_r-\lambda_k)}\,\mathbf u_k\mathbf u_k^\top
\Big\|_{\op}
=o_p(n^{-1}),
\]
where we used lemma ~\ref{lem:approx_eigvec} for $\mathbf U^\top\mathbf U$ and $\mathbf{M}_K$.
For $k\le K$, $k\neq r$, the approximate eigenvalue equation yields
\[
\mathbf u^\top\mathbf K_n\mathbf u_k
=
(n-1)\lambda_k(\mathbf u^\top\mathbf u_k)+\mathbf u^\top\mathbf e_k
=
\frac{(n-1)\lambda_k}{\sqrt n}\,T_{k,n}\,(1+o_p(1)),
\]
since $|\mathbf u^\top\mathbf e_k|\le \|\mathbf e_k\|=O_p(\sqrt n)$. Therefore, sandwiching the truncated resolvent gives
\[
\mathbf u^\top\mathbf K_n\mathbf V\bigl((n-1)\lambda_r\mathbf I-\mathbf M_K\bigr)^{-1}\mathbf V^\top\mathbf K_n\mathbf u
=
\sum_{\substack{k\le K\\ k\neq r}}
\frac{\lambda_k^2}{\lambda_r-\lambda_k}\,T_{k,n}^2
+ o_p(1)
\]
To control the tail, use $|\lambda_r-\lambda_k|\ge \gamma_r$ for $k\neq r$ to get
\[
\left|\sum_{k>K}\frac{\lambda_k^2}{\lambda_r-\lambda_k}T_{k,n}^2\right|
\le \frac{1}{\gamma_r}\sum_{k>K}\lambda_k^2T_{k,n}^2.
\]
Since $\mathbb{E}[T_{k,n}^2]=1$ for each $k\neq r$, one has
\[
\mathbb{E}\!\left[\sum_{k>K}\lambda_k^2T_{k,n}^2\right]
=
\sum_{k>K}\lambda_k^2
\xrightarrow[K\to\infty]{}0,
\]
and Markov's inequality for nonnegative random variables then implies
\[
\lim_{K\to\infty}\sup_n\mathbb{P}\!\left(\left|\sum_{k>K}\frac{\lambda_k^2}{\lambda_r-\lambda_k}T_{k,n}^2\right|>\varepsilon\right)=0,
\]
so combining the finite–mode identity with this tail bound and letting $K\to\infty$ yields the result.
\end{proof}

\begin{lemma}[Second-order Rayleigh-Schrödinger expansion for a simple eigenvalue]
\label{lem:expansion}
Let $\mathbf{M} \in \mathbb{R}^{n \times n}$ be symmetric, and fix $r \in \{1,\dots,n\}$ such that 
$\lambda_r := \lambda_r(\mathbf{M})$ is simple with eigengap
\[
\gamma_r := \min_{k \neq r}|\lambda_r(\mathbf{M}) - \lambda_k(\mathbf{M})| > 0.
\]
Define the perturbed matrix $\widehat{\mathbf{M}} = \mathbf{M} + \mathbf{E}$, and let $\mathbf{u}_r$
be the unit eigenvector of $\mathbf{M}$ corresponding to $\lambda_r$.
If $\|\mathbf{E}\|_{\op} < \gamma_r/2$, then
\[
\widetilde{\lambda}_r - \lambda_r
=
\mathbf{u}_r^\top \mathbf{E}\mathbf{u}_r + R,
\]
where $\widetilde{\lambda}_r := \lambda_r(\widehat{\mathbf{M}})$ and the remainder satisfies
\[
|R| \le \frac{2\|\mathbf{E}\|_{\op}^2}{\gamma_r}.
\]
\end{lemma}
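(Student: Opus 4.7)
The plan is to derive the expansion directly from the eigenvalue equation $\widehat{\mathbf{M}}\widetilde{\mathbf{u}}_r = \widetilde{\lambda}_r\widetilde{\mathbf{u}}_r$, using Weyl's inequality to transfer the spectral gap to the perturbed operator, and then to control the remainder via a Davis--Kahan-style bound on the rotation of the target eigenvector. Concretely, Weyl's inequality gives $|\widetilde{\lambda}_r - \lambda_r| \le \|\mathbf{E}\|_{\op} < \gamma_r/2$, so $|\widetilde{\lambda}_r - \lambda_k| \ge \gamma_r/2$ for every $k \ne r$, which is the separation needed below.

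Write $\widetilde{\mathbf{u}}_r = \alpha \mathbf{u}_r + \mathbf{w}$ with $\alpha := \mathbf{u}_r^\top \widetilde{\mathbf{u}}_r$ and $\mathbf{w}\perp\mathbf{u}_r$. Projecting the eigenvalue equation against $\mathbf{u}_r$ and using $\mathbf{M}\mathbf{u}_r = \lambda_r\mathbf{u}_r$ gives $(\widetilde{\lambda}_r - \lambda_r)\alpha = \alpha \mathbf{u}_r^\top\mathbf{E}\mathbf{u}_r + \mathbf{u}_r^\top\mathbf{E}\mathbf{w}$, so dividing by $\alpha$ isolates the remainder as $R = \alpha^{-1}\mathbf{u}_r^\top\mathbf{E}\mathbf{w}$. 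Projecting the same equation against each $\mathbf{u}_k$, $k\ne r$, yields $(\widetilde{\lambda}_r - \lambda_k)\mathbf{u}_k^\top\widetilde{\mathbf{u}}_r = \mathbf{u}_k^\top\mathbf{E}\widetilde{\mathbf{u}}_r$, whence Parseval and the halved gap give $\|\mathbf{w}\|^2 = \sum_{k\ne r}|\mathbf{u}_k^\top\widetilde{\mathbf{u}}_r|^2 \le 4\|\mathbf{E}\widetilde{\mathbf{u}}_r\|^2/\gamma_r^2 \le 4\|\mathbf{E}\|_{\op}^2/\gamma_r^2$. Substituting the resolvent identity $\mathbf{u}_k^\top\widetilde{\mathbf{u}}_r = (\widetilde{\lambda}_r - \lambda_k)^{-1}\mathbf{u}_k^\top\mathbf{E}\widetilde{\mathbf{u}}_r$ into the expansion $\mathbf{u}_r^\top\mathbf{E}\mathbf{w} = \sum_{k\ne r}(\mathbf{u}_r^\top\mathbf{E}\mathbf{u}_k)(\mathbf{u}_k^\top\widetilde{\mathbf{u}}_r)$, together with Cauchy--Schwarz and $|\widetilde{\lambda}_r-\lambda_k|\ge\gamma_r/2$, produces $|\mathbf{u}_r^\top\mathbf{E}\mathbf{w}| \le 2\|\mathbf{E}\mathbf{u}_r\|\,\|\mathbf{E}\widetilde{\mathbf{u}}_r\|/\gamma_r \le 2\|\mathbf{E}\|_{\op}^2/\gamma_r$. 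Combined with $\alpha^2 = 1 - \|\mathbf{w}\|^2 \ge 1 - 4\|\mathbf{E}\|_{\op}^2/\gamma_r^2 > 0$, the claimed bound $|R| \le 2\|\mathbf{E}\|_{\op}^2/\gamma_r$ follows.

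The main obstacle is pinning down precisely the constant $2$ in the remainder: the numerator estimate drops cleanly out of Cauchy--Schwarz combined with the halved gap, but the factor $1/\alpha$ adds a mild inflation unless one is in the regime where $\alpha$ is bounded very close to $1$. If that constant must be exact, the cleanest alternative route is the contour-integral representation $\widetilde{\lambda}_r = (2\pi i)^{-1}\oint_\Gamma z\,\mathrm{Tr}[(z\mathbf{I} - \widehat{\mathbf{M}})^{-1}]\,dz$ with $\Gamma$ a circle of radius $\gamma_r/2$ about $\lambda_r$: since $\|(z\mathbf{I}-\mathbf{M})^{-1}\|_{\op}\le 2/\gamma_r$ on $\Gamma$ and $\|\mathbf{E}\|_{\op}<\gamma_r/2$, the Neumann series for $(z\mathbf{I}-\widehat{\mathbf{M}})^{-1}$ converges uniformly on $\Gamma$, the zeroth and first terms reproduce $\lambda_r$ and $\mathbf{u}_r^\top\mathbf{E}\mathbf{u}_r$ by residue calculus at $z=\lambda_r$, and the geometric tail integrates to the stated $2\|\mathbf{E}\|_{\op}^2/\gamma_r$ remainder. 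Either route reduces the statement to the standard Kato perturbation bound already cited in the paper.
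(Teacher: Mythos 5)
Your block decomposition and the two projected equations are exactly the paper's starting point, and the Davis--Kahan bound $\|\mathbf{w}\|\le 2\|\mathbf{E}\|_{\op}/\gamma_r$ is computed correctly. The gap you yourself flag at the end is, however, a real one rather than a cosmetic issue about constants: from $R=\alpha^{-1}\mathbf{u}_r^\top\mathbf{E}\mathbf{w}$ and $|\mathbf{u}_r^\top\mathbf{E}\mathbf{w}|\le 2\|\mathbf{E}\|_{\op}^2/\gamma_r$, the only bound you can conclude is $|R|\le 2\|\mathbf{E}\|_{\op}^2/(\gamma_r|\alpha|)$, and under the stated hypothesis $\|\mathbf{E}\|_{\op}<\gamma_r/2$ the overlap $|\alpha|$ is only guaranteed to be strictly positive, not bounded below by a universal constant. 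Your sentence ``the claimed bound $|R|\le 2\|\mathbf{E}\|_{\op}^2/\gamma_r$ follows'' therefore does not follow. The source of the loss is that you substitute the resolvent identity with $\widetilde{\mathbf{u}}_r$ on the right-hand side and then majorize $\|\mathbf{E}\widetilde{\mathbf{u}}_r\|\le\|\mathbf{E}\|_{\op}$, which discards the factor of $\alpha$ hidden in $\widetilde{\mathbf{u}}_r=\alpha\mathbf{u}_r+\mathbf{w}$.

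The paper's proof uses the same decomposition but closes the loop differently: it solves the projected equation on $\mathbf{u}_r^\perp$ for the rotation component $\mathbf{z}$ itself, obtaining $\mathbf{z}=-\xi\,(\mathbf{L}_0+\mathbf{\Delta})^{-1}\mathbf{Q}^\top\mathbf{E}\mathbf{u}_r$ where $\xi$ is the overlap. Because $\mathbf{z}$ is manifestly proportional to $\xi$, the division by $\xi$ in the scalar equation cancels exactly, yielding $R=-\mathbf{u}_r^\top\mathbf{E}\mathbf{Q}\,(\mathbf{L}_0+\mathbf{\Delta})^{-1}\mathbf{Q}^\top\mathbf{E}\mathbf{u}_r$ with no residual $1/\xi$. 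In other words, the fix is to express the perpendicular component in terms of $\mathbf{u}_r$ (equivalently, to work with $\mathbf{w}/\alpha$ from the start), not in terms of $\widetilde{\mathbf{u}}_r$. Your second suggestion, the contour-integral route via the Neumann expansion of $(z\mathbf{I}-\widehat{\mathbf{M}})^{-1}$ on a circle of radius $\gamma_r/2$, is a legitimate alternative in principle, but as sketched it is not a proof: you would need to verify (i) that exactly one eigenvalue of $\widehat{\mathbf{M}}$ lies inside $\Gamma$, (ii) that the residue computation for the trace does not pick up a dimension factor $n$ in the tail, and (iii) that the geometric series actually sums to the stated constant; none of these is spelled out, and (ii) in particular is not immediate from the naive operator-norm bound on the trace.
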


\begin{proof}
We follow the block decomposition with respect to the eigenspace of $\lambda_r$.
Let $\{\mathbf{u}_1,\dots,\mathbf{u}_n\}$ be an orthonormal eigenbasis of $\mathbf{M}$ with
$\mathbf{M}\mathbf{u}_k=\lambda_k(\mathbf{M})\mathbf{u}_k$ and $\mathbf{u}_r$ corresponding to $\lambda_r$.
Define
\[
\mathbf{Q}
:=
[\mathbf{u}_1,\dots,\mathbf{u}_{r-1},\mathbf{u}_{r+1},\dots,\mathbf{u}_n]
\in \mathbb{R}^{n\times(n-1)},
\]
so that $\mathbf{Q}$ spans the orthogonal complement of $\mathbf{u}_r$ and
\(
\mathbf{Q}^\top \mathbf{Q}=\mathbf{I}_{n-1},
\,
\mathbf{Q}\mathbf{Q}^\top=\mathbf{I}_n-\mathbf{u}_r\mathbf{u}_r^\top.
\)

Every $\mathbf{x}\in\mathbb{R}^n$ admits a unique decomposition
\[
\mathbf{x}=\mathbf{u}_r\xi+\mathbf{Q}\mathbf{z},
\qquad
\xi\in\mathbb{R},\ \mathbf{z}\in\mathbb{R}^{n-1}.
\]

Let $\widetilde{\mathbf{u}}_r$ be the unit eigenvector of $\widehat{\mathbf{M}}$
associated with $\widetilde{\lambda}_r$, and write
\[
\widetilde{\mathbf{u}}_r=\mathbf{u}_r\xi+\mathbf{Q}\mathbf{z},
\qquad
\xi^2+\|\mathbf{z}\|^2=1.
\]

The eigenvalue equation
\[
(\mathbf{M}+\mathbf{E})\widetilde{\mathbf{u}}_r
=
\widetilde{\lambda}_r \widetilde{\mathbf{u}}_r
\]
becomes
\[
(\mathbf{M}+\mathbf{E})(\mathbf{u}_r\xi+\mathbf{Q}\mathbf{z})
=
\widetilde{\lambda}_r(\mathbf{u}_r\xi+\mathbf{Q}\mathbf{z}).
\]

Projecting onto $\mathbf{u}_r$ and onto $\text{span}(\mathbf{Q})$ yields
\[
\begin{cases}
\lambda_r\xi + \mathbf{u}_r^\top \mathbf{E}(\mathbf{u}_r\xi+\mathbf{Q}\mathbf{z})
= \widetilde{\lambda}_r \xi, \\[0.3em]
\mathbf{Q}^\top \mathbf{M}\mathbf{Q}\mathbf{z}
+ \mathbf{Q}^\top \mathbf{E}(\mathbf{u}_r\xi+\mathbf{Q}\mathbf{z})
= \widetilde{\lambda}_r \mathbf{z}.
\end{cases}
\]

Let $\alpha:=\widetilde{\lambda}_r-\lambda_r$.
Rewrite the second equation as
\[
\bigl(\mathbf{L}_0 + \mathbf{\Delta}\bigr)\mathbf{z}
=
-\xi\,\mathbf{Q}^\top\mathbf{E}\mathbf{u}_r,
\]
where
\[
\mathbf{L}_0 := \mathbf{Q}^\top(\mathbf{M}-\lambda_r\mathbf{I})\mathbf{Q},
\qquad
\mathbf{\Delta} := \mathbf{Q}^\top(\mathbf{E}-\alpha\mathbf{I})\mathbf{Q}.
\]

By the eigengap assumption, $\|\mathbf{L}_0^{-1}\|_{\op}\le 1/\gamma_r$.
Under $\|\mathbf{E}\|_{\op}<\gamma_r/2$,
$\mathbf{L}_0+\mathbf{\Delta}$ is invertible and
\[
\|(\mathbf{L}_0+\mathbf{\Delta})^{-1}\|_{\op}
\le \frac{2}{\gamma_r}.
\]

Hence
\[
\mathbf{z}
=
-\xi\,(\mathbf{L}_0+\mathbf{\Delta})^{-1}
\mathbf{Q}^\top\mathbf{E}\mathbf{u}_r.
\]

From the first projected equation,
\[
\alpha\xi
=
\xi\,\mathbf{u}_r^\top\mathbf{E}\mathbf{u}_r
+
\mathbf{u}_r^\top\mathbf{E}\mathbf{Q}\mathbf{z}.
\]

If $\xi\neq0$, divide by $\xi$ and substitute $\mathbf{z}$:
\[
\alpha
=
\mathbf{u}_r^\top\mathbf{E}\mathbf{u}_r
-
\mathbf{u}_r^\top\mathbf{E}\mathbf{Q}
(\mathbf{L}_0+\mathbf{\Delta})^{-1}
\mathbf{Q}^\top\mathbf{E}\mathbf{u}_r.
\]

We now expand the inverse via a resolvent identity:
\[
(\mathbf{L}_0+\mathbf{\Delta})^{-1}
=
\mathbf{L}_0^{-1}
-
\mathbf{L}_0^{-1}\mathbf{\Delta}\mathbf{L}_0^{-1}
+
\mathbf{R}_3,
\]
where
\(
\|\mathbf{R}_3\|_{\op}
\le C \|\mathbf{E}\|_{\op}^2/\gamma_r^3.
\)

Substituting gives
\[
\alpha
=
\mathbf{u}_r^\top\mathbf{E}\mathbf{u}_r
-
\mathbf{u}_r^\top\mathbf{E}\mathbf{Q}
\mathbf{L}_0^{-1}
\mathbf{Q}^\top\mathbf{E}\mathbf{u}_r
+
R_3,
\]
with
\(
|R_3|
\le C\|\mathbf{E}\|_{\op}^3/\gamma_r^2.
\)

Since
\[
\mathbf{Q}\mathbf{L}_0^{-1}\mathbf{Q}^\top
=
\sum_{k\ne r}
\frac{1}{\lambda_k-\lambda_r}
\mathbf{u}_k\mathbf{u}_k^\top,
\]
we obtain the explicit second-order Rayleigh–Schrödinger term
\[
\mathbf{u}_r^\top\mathbf{E}\mathbf{Q}
\mathbf{L}_0^{-1}
\mathbf{Q}^\top\mathbf{E}\mathbf{u}_r
=
\sum_{k\ne r}
\frac{(\mathbf{u}_k^\top\mathbf{E}\mathbf{u}_r)^2}
{\lambda_r-\lambda_k}.
\]

Therefore,
\[
\widetilde{\lambda}_r-\lambda_r
=
\mathbf{u}_r^\top\mathbf{E}\mathbf{u}_r
+
\sum_{k\ne r}
\frac{(\mathbf{u}_k^\top\mathbf{E}\mathbf{u}_r)^2}
{\lambda_r-\lambda_k}
+
R_3,
\]
with
\[
|R_3|
\le
C\frac{\|\mathbf{E}\|_{\op}^3}{\gamma_r^2}.
\]

This yields the full second-order Rayleigh–Schrödinger expansion.
\end{proof}

\section*{Appendix B: U-Statistic and Kernel Decompositions} \setcounter{section}{2} \renewcommand{\thesection}{B.\arabic{section}} \renewcommand{\thelemma}{B.\arabic{lemma}}

\begin{lemma}[Degenerate U-statistic in the degenerate regime]\label{lem:degU}
Assume $\varphi_r^2 \equiv 1$ almost everywhere. Let $h_r(x,y) := \varphi_r(x)W(x,y)\varphi_r(y)$ and define the Hoeffding-projected degenerate kernel
\[
h_{r,2}(x,y) := h_r(x,y) - \mathbb{E}[h_r(x,U)] - \mathbb{E}[h_r(U,y)] + \mathbb{E}[h_r(U,U')],
\]
where $U, U' \stackrel{\text{iid}}{\sim} \text{Unif}[0,1]$. Define the cross-projections
\[
T_{k,n} := \frac{1}{\sqrt{n}}\sum_{i=1}^n \varphi_r(U_i)\varphi_k(U_i), \qquad k \neq r.
\]
Then
\[
\frac{1}{n}\sum_{i \neq j}h_{r,2}(U_i,U_j)
=
\sum_{k \neq r}\lambda_k\left(T_{k,n}^2 - 1\right) + o_p(1),
\]
where the series converges in $L^2$-norm under the Hilbert-Schmidt condition $\sum_{k \ge 1}\lambda_k^2 < \infty$.
\end{lemma}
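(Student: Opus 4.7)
The plan is to diagonalize the degenerate Hoeffding kernel $h_{r,2}$ using the spectral expansion of $W$, and then reduce the off-diagonal double sum to a quadratic form in the cross-projections $T_{k,n}$. Since $T_W$ is Hilbert-Schmidt, one has $W(x,y)=\sum_{k\ge 1}\lambda_k \varphi_k(x)\varphi_k(y)$ in $L^2([0,1]^2)$. Setting $g_k(x):=\varphi_r(x)\varphi_k(x)$, orthonormality of $\{\varphi_k\}$ gives $\E[g_k(U)]=\delta_{kr}$, and because $\varphi_r^2\equiv1$ a.e., one further has $\E[g_k(U)g_l(U)]=\E[\varphi_k(U)\varphi_l(U)]=\delta_{kl}$; hence $\{g_k\}_{k\neq r}$ is orthonormal and centered in $L^2(\Unif[0,1])$.

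I would first compute the Hoeffding projection explicitly. From $\E[h_r(x,U)]=\varphi_r(x)(T_W\varphi_r)(x)=\lambda_r\varphi_r(x)^2=\lambda_r$, and symmetrically $\E[h_r(U,y)]=\E[h_r(U,U')]=\lambda_r$, the projection collapses to $h_{r,2}(x,y)=h_r(x,y)-\lambda_r=\sum_{k\neq r}\lambda_k g_k(x)g_k(y)$. For each truncation level $K$, substituting into the off-diagonal sum and using the identity $\sum_{i\neq j}g_k(U_i)g_k(U_j)=nT_{k,n}^2-\sum_i\varphi_k(U_i)^2$ (where the simplification $g_k(U_i)^2=\varphi_k(U_i)^2$ invokes $\varphi_r^2\equiv1$) yields
\[
\frac{1}{n}\sum_{i\neq j}\sum_{\substack{k\le K\\ k\neq r}}\lambda_k g_k(U_i)g_k(U_j)
=\sum_{\substack{k\le K\\ k\neq r}}\lambda_k\bigl(T_{k,n}^2-Q_{k,n}\bigr),
\]
where $Q_{k,n}:=n^{-1}\sum_i\varphi_k(U_i)^2\xrightarrow{p}1$ by the law of large numbers, so the truncated contribution equals $\sum_{k\le K,\,k\neq r}\lambda_k(T_{k,n}^2-1)+o_p(1)$ for each fixed $K$.

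The main obstacle is uniform-in-$n$ control of the tail contribution. The kernel $R_K(x,y):=\sum_{k>K}\lambda_k g_k(x)g_k(y)$ is a degenerate two-sample U-kernel under $\Unif[0,1]$ since $\{g_k\}_{k\neq r}$ is centered, and orthonormality of $\{g_k\otimes g_k\}_{k\neq r}$ in $L^2([0,1]^2)$ yields $\|R_K\|_{L^2}^2=\sum_{k>K}\lambda_k^2$. The variance identity for degenerate second-order U-statistics then delivers
\[
\E\Bigl[\Bigl(\tfrac{1}{n}\sum_{i\neq j}R_K(U_i,U_j)\Bigr)^2\Bigr]
=\tfrac{2(n-1)}{n}\sum_{k>K}\lambda_k^2
\le 2\sum_{k>K}\lambda_k^2\xrightarrow[K\to\infty]{}0
\]
by the Hilbert-Schmidt condition, uniformly in $n$. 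Combining the finite-$K$ identity with this Markov-type tail bound via a standard diagonal truncation argument yields the stated $o_p(1)$ error. The joint CLT for $(T_{k,n})_{k\le K,\,k\neq r}$ (iid summands with covariance $\E[g_k g_l]=\delta_{kl}$) identifies the pointwise-in-$K$ limit, and the $L^2$ convergence of the limiting series $\sum_{k\neq r}\lambda_k(Z_k^2-1)$ is immediate from $\sum_k\lambda_k^2<\infty$ together with $\E[(Z_k^2-1)^2]=2$ and independence of $(Z_k)$.
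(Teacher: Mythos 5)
Your proposal follows essentially the same route as the paper's proof: spectrally expand $h_{r,2}(x,y)=\sum_{k\neq r}\lambda_k\varphi_r(x)\varphi_k(x)\varphi_r(y)\varphi_k(y)$, use the elementary off-diagonal identity to express the contribution of each mode $k$ as $\lambda_k\bigl(T_{k,n}^2-n^{-1}\sum_i\varphi_k(U_i)^2\bigr)$, pass to the limit over a finite truncation via the law of large numbers and Cram\'er--Wold, and control the infinite tail through the second-moment identity for degenerate U-statistics combined with $\|R_K\|_{L^2}^2=\sum_{k>K}\lambda_k^2$. The only notable difference is cosmetic: where the paper invokes a $\lesssim$ bound for $\E[(U_n^{(K)})^2]$, you record the exact constant $\tfrac{2(n-1)}{n}$, which is correct and slightly sharper but does not change the argument.
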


\begin{proof}
We first obtain an exact spectral identity, then pass to the infinite sum by truncation and an $L^2$ tail bound. Since $\varphi_r(x)^2=1$ a.e., we have $\varphi_r\in\{-1,1\}$ a.e. Using
$W(x,y)=\sum_{k\ge1}\lambda_k\varphi_k(x)\varphi_k(y)$,
\[
h_r(x,y)=\varphi_r(x)W(x,y)\varphi_r(y)
=\sum_{k\ge1}\lambda_k\,[\varphi_r(x)\varphi_k(x)]\,[\varphi_r(y)\varphi_k(y)].
\]
By orthonormality, $\mathbb{E}[\varphi_r(U)\varphi_k(U)]=\langle \varphi_r,\varphi_k\rangle_{L^2}=\delta_{rk}$, hence
\[
\mathbb{E}[h_r(x,U)]
=
\varphi_r(x)\sum_{k\ge1}\lambda_k\varphi_k(x)\,\delta_{rk}
=
\lambda_r\varphi_r(x)^2=\lambda_r.
\]
By symmetry, $\mathbb{E}[h_r(U,y)]=\lambda_r$ and $\mathbb{E}[h_r(U,U')]=\lambda_r$, so
\[
h_{r,2}(x,y)=h_r(x,y)-\lambda_r
=\varphi_r(x)\varphi_r(y)\sum_{k\neq r}\lambda_k\,\varphi_k(x)\varphi_k(y).
\]
For $k\neq r$, set $\xi_{k,i}:=\varphi_r(U_i)\varphi_k(U_i)$. Then
\begin{align*}
\frac{1}{n}\sum_{i\neq j}h_{r,2}(U_i,U_j)
&=
\sum_{k\neq r}\lambda_k\cdot \frac{1}{n}\sum_{i\neq j}\xi_{k,i}\xi_{k,j}\\
&=
\sum_{k\neq r}\lambda_k\left[
\frac{1}{n}\Big(\sum_{i=1}^n\xi_{k,i}\Big)^2-\frac{1}{n}\sum_{i=1}^n\xi_{k,i}^2
\right].
\end{align*}
Since $T_{k,n}=n^{-1/2}\sum_{i=1}^n\xi_{k,i}$ and $\xi_{k,i}^2=\varphi_r(U_i)^2\varphi_k(U_i)^2=\varphi_k(U_i)^2$,
\[
\frac{1}{n}\sum_{i\neq j}h_{r,2}(U_i,U_j)
=
\sum_{k\neq r}\lambda_k\left(
T_{k,n}^2-\frac{1}{n}\sum_{i=1}^n\varphi_k(U_i)^2
\right).
\]
Because $\mathbb{E}[\varphi_k(U)^2]=1$, the strong law gives
$n^{-1}\sum_{i=1}^n\varphi_k(U_i)^2\to 1$ a.s. for each fixed $k$.
Therefore, for any fixed $K<\infty$,
\[
\sum_{\substack{k\le K\\ k\neq r}}\lambda_k\left(
T_{k,n}^2-\frac{1}{n}\sum_{i=1}^n\varphi_k(U_i)^2
\right)
=
\sum_{\substack{k\le K\\ k\neq r}}\lambda_k(T_{k,n}^2-1)+o_p(1),
\]
where the $o_p(1)$ comes from a finite sum of terms
$\lambda_k\left(1-n^{-1}\sum_i\varphi_k(U_i)^2\right)$.

Fix $K<\infty$. For each $k\neq r$, $T_{k,n}$ is a normalized sum of i.i.d.\ centered variables.
Indeed, $\mathbb{E}[\varphi_r(U)\varphi_k(U)]=0$ for $k\neq r$, and
\[
\Var(\varphi_r(U)\varphi_k(U))
=
\mathbb{E}[\varphi_r(U)^2\varphi_k(U)^2]
=
\mathbb{E}[\varphi_k(U)^2]
=
1,
\]
since $\varphi_r^2\equiv 1$ almost everywhere.
Therefore, for each fixed $k\neq r$, the classical central limit theorem gives
$T_{k,n}\xrightarrow{d}\mathcal{N}(0,1)$.
Moreover, for any fixed coefficients $(a_k)_{k\le K,\,k\neq r}$,
the linear combination $\sum_{k\le K,\,k\neq r} a_k T_{k,n}$ is again a normalized sum of i.i.d.\ variables
with finite variance, so the Cram\'er-Wold device yields joint convergence of
$(T_{k,n})_{k\le K,\,k\neq r}$ to i.i.d.\ $\mathcal{N}(0,1)$ limits.

To control the tail, define the tail kernel and associated U-statistic
\[
h_{r,2}^{(K)}(x,y):=\varphi_r(x)\varphi_r(y)\sum_{k>K}\lambda_k\varphi_k(x)\varphi_k(y),
\qquad
U_n^{(K)}:=\frac{1}{n}\sum_{i\neq j} h_{r,2}^{(K)}(U_i,U_j).
\]
By the same algebra as above applied to the truncated series (restricting the sum to $k>K$), we also have the identity
\[
U_n^{(K)}=\sum_{k>K}\lambda_k\left(T_{k,n}^2-\frac{1}{n}\sum_{i=1}^n\varphi_k(U_i)^2\right).
\]
Since $h_{r,2}^{(K)}$ is symmetric and degenerate (i.e., $\mathbb{E}[h_{r,2}^{(K)}(x,U)]=0$ for all $x$), the standard second-moment identity for degenerate U-statistics (see Hoeffding \citet{hoeffding1992class}) yields a bound of the form
\[
\sup_{n\ge 2}\mathbb{E}\big[(U_n^{(K)})^2\big]\ \lesssim\ \|h_{r,2}^{(K)}\|_{L^2([0,1]^2)}^2.
\]
Using orthonormality of $\{\varphi_k\}$,
\[
\|h_{r,2}^{(K)}\|_{L^2}^2
=
\int_0^1\!\!\int_0^1\Big(\sum_{k>K}\lambda_k\varphi_k(x)\varphi_k(y)\Big)^2dx\,dy
=
\sum_{k>K}\lambda_k^2,
\]
so
\[
\sup_{n\ge 2}\mathbb{E}\big[(U_n^{(K)})^2\big]\ \lesssim\ \sum_{k>K}\lambda_k^2
\ \xrightarrow[K\to\infty]{}\ 0.
\]
By Markov's inequality,
$\lim_{K\to\infty}\sup_n\mathbb{P}(|U_n^{(K)}|>\varepsilon)=0$ for every $\varepsilon>0$.

Combining the exact identity
\[
\frac{1}{n}\sum_{i\neq j}h_{r,2}(U_i,U_j)
=
\sum_{k\neq r}\lambda_k\left(
T_{k,n}^2-\frac{1}{n}\sum_{i=1}^n\varphi_k(U_i)^2
\right)
\]
with the finite-$K$ approximation from the strong law and the joint CLT, and then sending $K\to\infty$ using the $L^2$ tail bound for $U_n^{(K)}$, we conclude that
\[
\frac{1}{n}\sum_{i\neq j}h_{r,2}(U_i,U_j)
=
\sum_{k\neq r}\lambda_k(T_{k,n}^2-1)+o_p(1),
\]
and the series convergence holds in $L^2$ (hence in probability).
\end{proof}

\begin{lemma}[Square-summability of fluctuation coefficients]\label{lem:squaresum}
Let \(T_W:L^2([0,1])\to L^2([0,1])\) be the compact self-adjoint operator associated with the symmetric kernel \(W\in L^2([0,1]^2)\cap L^\infty([0,1]^2)\), and let \(\{(\lambda_k,\varphi_k)\}_{k\ge1}\) be its orthonormal eigenpairs. 
Fix a simple eigenvalue \(\lambda_r\) with eigengap 
\(\gamma_r = \inf_{k\neq r} |\lambda_r - \lambda_k| > 0\).
Then
\[
\sum_{k\neq r} \left( \frac{\lambda_r\lambda_k}{\lambda_r - \lambda_k} \right)^2 < \infty.
\]
\end{lemma}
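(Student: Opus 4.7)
The plan is very short because the statement is essentially a uniform-denominator bound combined with Hilbert-Schmidt summability. First I would observe that the only quantity making the series non-trivial is the small denominator $\lambda_r - \lambda_k$; the eigengap hypothesis $\gamma_r = \inf_{k\neq r}|\lambda_r-\lambda_k|>0$ precisely prevents this from blowing up. So the natural strategy is to separate the numerator from the denominator and bound each uniformly.

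Concretely, I would argue as follows. For every $k\neq r$, the eigengap gives $|\lambda_r-\lambda_k|\ge\gamma_r$, hence
\[
\left(\frac{\lambda_r\lambda_k}{\lambda_r-\lambda_k}\right)^{2}
\le \frac{\lambda_r^{2}}{\gamma_r^{2}}\,\lambda_k^{2}.
\]
Summing over $k\neq r$ and pulling the $k$-independent factor out,
\[
\sum_{k\neq r}\left(\frac{\lambda_r\lambda_k}{\lambda_r-\lambda_k}\right)^{2}
\le \frac{\lambda_r^{2}}{\gamma_r^{2}}\sum_{k\neq r}\lambda_k^{2}
\le \frac{\lambda_r^{2}}{\gamma_r^{2}}\sum_{k\ge1}\lambda_k^{2}.
\]
Finally, since $W\in L^2([0,1]^2)$, the operator $T_W$ is Hilbert-Schmidt and its Hilbert-Schmidt norm admits the spectral representation $\|T_W\|_{\mathrm{HS}}^{2}=\sum_{k\ge1}\lambda_k^{2}=\|W\|_{L^2([0,1]^2)}^{2}<\infty$. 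Substituting this into the previous display yields the claim with the explicit bound $\lambda_r^{2}\|W\|_{L^2}^{2}/\gamma_r^{2}$.

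There is no real obstacle here: the simplicity of $\lambda_r$ is used only to guarantee $\gamma_r>0$, and the $L^2$ assumption on $W$ supplies the summability $\sum_k\lambda_k^{2}<\infty$. The $L^\infty$ hypothesis on $W$ is not needed for this particular lemma. The only point worth being careful about is that the inequality $|\lambda_r-\lambda_k|\ge\gamma_r$ must hold uniformly in $k\neq r$, which is exactly the definition of $\gamma_r$ as an infimum together with simplicity of $\lambda_r$; no further argument is required.
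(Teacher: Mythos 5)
Your proof is correct and follows essentially the same route as the paper: bound the denominator uniformly by the eigengap $\gamma_r$, pull out the constant $\lambda_r^2/\gamma_r^2$, and invoke the Hilbert-Schmidt identity $\sum_k\lambda_k^2=\|W\|_{L^2}^2<\infty$. Your added remark that the $L^\infty$ hypothesis is superfluous for this particular lemma is accurate, though the paper does not make this observation explicit.
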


\begin{proof}
Since \(T_W\) is Hilbert-Schmidt, we have 
\(\sum_{k=1}^\infty \lambda_k^2 = \|W\|_{L^2([0,1]^2)}^2 < \infty.\)
For \(k\neq r\), note that \(|\lambda_r - \lambda_k| \ge \gamma_r > 0\) by the eigengap assumption, so
\[
\left(\frac{\lambda_r\lambda_k}{\lambda_r - \lambda_k}\right)^2 
\le \frac{\lambda_r^2}{\gamma_r^2}\,\lambda_k^2.
\]
Summing over all \(k\neq r\) yields
\[
\sum_{k\neq r}\left(\frac{\lambda_r\lambda_k}{\lambda_r - \lambda_k}\right)^2
\le \frac{\lambda_r^2}{\gamma_r^2} \sum_{k\neq r}\lambda_k^2
\le \frac{\lambda_r^2}{\gamma_r^2}\|W\|_{L^2([0,1]^2)}^2 < \infty.
\]
Hence the coefficients are square-summable, and the random series
\(\sum_{k\neq r} \tfrac{\lambda_r\lambda_k}{\lambda_r - \lambda_k}(Z_k^2-1)\)
converges in \(L^2\) by the completeness of \(\ell^2\) and independence of the Gaussian sequence \((Z_k)\).
\end{proof}

\begin{lemma}\label{lem:norm}
Let
\[
V_{r,n}:=\frac{1}{n}\sum_{i=1}^n(\varphi_r(U_i)^2-1),
\qquad
s_n:=\sum_{i=1}^n \varphi_r(U_i)^2 = n(1+V_{r,n}).
\]
Assume $\Var(\varphi_r(U)^2)<\infty$. Then $V_{r,n}=O_p(n^{-1/2})$ and
\[
\frac{1}{s_n}
=
\frac{1}{n}\Bigl(1 - V_{r,n} + O_p(V_{r,n}^2)\Bigr),
\qquad
\frac{1}{\sqrt{s_n}}
=
\frac{1}{\sqrt{n}}\Bigl(1 - \tfrac{1}{2}V_{r,n} + O_p(V_{r,n}^2)\Bigr).
\]
In particular, since $V_{r,n}^2=O_p(n^{-1})$, both remainders are $O_p(n^{-1})$.
\end{lemma}

\begin{proof}
Assume $\Var(\varphi_r(U)^2)<\infty$ and recall
\[
V_{r,n}:=\frac{1}{n}\sum_{i=1}^n\bigl(\varphi_r(U_i)^2-1\bigr),
\qquad
s_n:=\sum_{i=1}^n \varphi_r(U_i)^2=n(1+V_{r,n}).
\]
Since $\E[\varphi_r(U)^2]=1$ and $\Var(\varphi_r(U)^2)<\infty$, we have
\[
\E[V_{r,n}]=0,
\qquad
\Var(V_{r,n})=\frac{1}{n}\Var(\varphi_r(U)^2),
\]
hence by Chebyshev's inequality $V_{r,n}=O_p(n^{-1/2})$.

For the first expansion, write
\[
\frac{1}{s_n}=\frac{1}{n}(1+V_{r,n})^{-1}.
\]
Using the Taylor expansion of $f(x)=(1+x)^{-1}$ at $0$,
\[
(1+x)^{-1}=1-x+x^2+R_1(x),
\]
where the remainder satisfies $|R_1(x)|\le C|x|^3$ for all $|x|\le 1/2$ and a
universal constant $C>0$. On the event $\{|V_{r,n}|\le 1/2\}$,
\[
(1+V_{r,n})^{-1}
=
1-V_{r,n}+V_{r,n}^2+R_1(V_{r,n}),
\qquad
|R_1(V_{r,n})|\le C|V_{r,n}|^3.
\]
Since $V_{r,n}=O_p(n^{-1/2})$, we have $V_{r,n}^2=O_p(n^{-1})$ and
$V_{r,n}^3=O_p(n^{-3/2})$, so
\[
(1+V_{r,n})^{-1}
=
1-V_{r,n}+O_p(V_{r,n}^2).
\]
Multiplying by $1/n$ yields
\[
\frac{1}{s_n}
=
\frac{1}{n}\Bigl(1-V_{r,n}+O_p(V_{r,n}^2)\Bigr).
\]

For the second expansion, similarly
\[
\frac{1}{\sqrt{s_n}}=\frac{1}{\sqrt n}(1+V_{r,n})^{-1/2}.
\]
Using the Taylor expansion of $g(x)=(1+x)^{-1/2}$ at $0$,
\[
(1+x)^{-1/2}=1-\tfrac12 x+R_2(x),
\]
where $|R_2(x)|\le C'x^2$ for all $|x|\le 1/2$ and a universal constant $C'>0$.
On $\{|V_{r,n}|\le 1/2\}$ we obtain
\[
(1+V_{r,n})^{-1/2}
=
1-\tfrac12 V_{r,n}+R_2(V_{r,n}),
\qquad
|R_2(V_{r,n})|\le C'V_{r,n}^2,
\]
hence
\[
(1+V_{r,n})^{-1/2}
=
1-\tfrac12 V_{r,n}+O_p(V_{r,n}^2).
\]
Multiplying by $1/\sqrt n$ gives
\[
\frac{1}{\sqrt{s_n}}
=
\frac{1}{\sqrt n}\Bigl(1-\tfrac12 V_{r,n}+O_p(V_{r,n}^2)\Bigr).
\]
Since $V_{r,n}^2=O_p(n^{-1})$, both $O_p(V_{r,n}^2)$ remainders are also
$O_p(n^{-1})$.
\end{proof}

\bibliographystyle{apalike}
\bibliography{Reference}
\end{document}